\renewcommand{\H}{\mathbb{H}}
\newcommand{\E}{\mathbb{E}}
\newcommand{\G}{\mathbb{G}}
\newcommand{\N}{\mathbb{N}}
\newcommand{\R}{\mathbb{R}}
\renewcommand{\S}{\mathbb{S}}
\newcommand{\vE}{\varmathbb{E}}
\newcommand{\vG}{\varmathbb{G}}
\newcommand{\vH}{\varmathbb{H}}
\newcommand{\vM}{\varmathbb{M}}
\newcommand{\vX}{\varmathbb{X}}
\newcommand{\vT}{\varmathbb{T}}
\newcommand{\cD}{\mathcal{D}}
\newcommand{\cF}{\mathcal{F}}
\newcommand{\cH}{\mathcal{H}}
\newcommand{\cL}{\mathcal{L}}
\newcommand{\cJ}{\mathcal{J}}
\newcommand{\diam}{\mbox{diam}}
\newcommand{\Lip}{\mbox{Lip}}
\newcommand{\dist}{\mbox{dist}}
\newcommand{\ep}{\varepsilon}
\newcommand{\sm}{\setminus}
\newcommand{\lan}{\langle}
\newcommand{\ran}{\rangle}
\newcommand{\lra}{\longrightarrow}
\newcommand{\der}{\partial}
\newtheorem{The}{Theorem}[section]
\newtheorem{Lem}{Lemma}[section]
\newtheorem{Rem}{Remark}[section]
\newtheorem{Def}{Definition}[section]
\newtheorem{Pro}{Proposition}[section]
\newtheorem{Cor}{Corollary}[section]
\begin{document}

\title
[Radon-Nikodym property and area formula]
{Radon-Nikodym property and area formula for Banach homogeneous group targets}
\author{Valentino Magnani}
\address{Valentino Magnani, Dipartimento di Matematica \\
Largo Bruno Pontecorvo 5 \\ 56127, Pisa, Italy}
\email{magnani@dm.unipi.it}
\author{Tapio Rajala}
\address{Tapio Rajala, Scuola Normale Superiore\\
Piazza dei Cavalieri 7\\
56127, Pisa, Italy}
\email{tapio.rajala@sns.it}
\thanks{The authors acknowledge the support of the European Project ERC AdG *GeMeThNES*.}
\thanks{The second author also acknowledges the support of the Academy of Finland, project no. 137528.}
\subjclass[2000]{Primary 28A75.}
\keywords{Radon-Nikodym property, Rademacher theorem, Carnot group, graded nilpotent 
\indent Lie group, Banach homogeneous group, metric area formula} 
\date{\today}

\begin{abstract}
We prove a Rademacher-type theorem for Lipschitz mappings from a subset of a Carnot group to
a Banach homogeneous group, equipped with a suitably weakened Radon-Nikodym property.
We provide a metric area formula that applies to these mappings and more generally 
to all almost everywhere metrically differentiable Lipschitz mappings defined on a Carnot group.
\end{abstract}
\maketitle

\tableofcontents

\newpage 

%
%
%
%

\section{Introduction}

A Banach space that has the so-called Radon-Nikodym property, in short RNP, satisfies the condition that all Lipschitz curves taking values in this space are almost everywhere 
Fr\'echet differentiable.
This property is equivalent to the fact that for a Banach space $\vX$
all $\vX$-valued Lipschitz mappings defined on the Euclidean space are 
almost everywhere Fr\'echet differentiable if and only if $\vX$ has the RNP.

Recently, J. Cheeger and B. Kleiner have shown that Banach spaces with the RNP form
 the sharp class of targets for which a Rademacher-type theorem holds, when the source space 
is a doubling metric space, satisfying the Poincar\'e inequality, in short a PI space, \cite{CheKle09}. 
On the other hand, this result does not cover the Pansu differentiability of Lipschitz mappings, 
\cite{Pan89}, that is a Rademacher-type theorem between Carnot groups. 
In fact, although here the target is finite dimensional, the difference arises from the intrinsic nonlinear nature of the notion of Pansu differentiability.

The first result in this work is an infinite dimensional version of almost everywhere 
Pansu differentiability of Lipschitz mappings, \cite{Pan89}. In other words, 
we show that replacing PI spaces with Carnot groups allows us to extend the family of targets for which we have a Rademacher-type theorem. 
These targets are Banach Lie groups metrized by a suitable left invariant distance.
In the commutative case, they also include the classical Banach spaces, but their main feature is that the RNP is required only on a special closed subspace, that is the so-called {\em horizontal subspace}. Notice that in our Banach Lie groups this subspace is in general an infinite dimensional Banach space.

These Lie groups can be naturally called {\em Banach homogeneous groups}, since they are a straightforward generalization of their well known finite dimensional version. For the finite dimensional case, the reader can consult for instance \cite{FS82}.
A simple way to present these infinite dimensional versions may consist in requiring 
the validity of the crucial properties that hold in the finite dimensional case, as the existence 
of a group operation with special structure, the existence of a homogeneous norm,
along with dilations and so on. 
This presentation by axioms can be found in \cite{Rog07}. 

We follow a different approach, detecting these groups as {\em graded nilpotent Banach Lie algebras},
since all the above mentioned properties are just consequences, see Section~\ref{Sect:infdim}.
In fact, one can see a Banach homogeneous group as a Banach space $\vM$ equipped with a graded nilpotent Lie product that turns it into a Banach Lie algebra. Thus, we automatically get the group operation by the Dynkin formula for the BCH series, that locally converges in general Banach Lie algebras, \cite{Dyn53}. Of course, in our case, this series is just a finite sum, that clearly everywhere converges, since we consider a nilpotent Lie algebra.

In sum, we equip $\vM$ with three structures, since it is a Banach space,
a Banach Lie algebra and also Banach Lie group. Its main feature is the decomposition into the
direct sum $\vM=H_1\oplus\cdots\oplus H_\iota$, where $H_j$ are closed subspaces of $\vM$, seen as Banach spaces. 
This yields a precise gradation on the Lie algebra structure of $\vM$ that allows 
us to introduce dilations $\delta_r:\vM\lra\vM$, that are automatically group isomorphisms.
Furthermore, one can also construct a homogeneous norm on $\vM$ that respects both the group operation and dilations,
hence defining the metric structure of $\vM$, see Section~\ref{Sect:infdim} for more details.

As in the finite dimensional case, one can also define the special class of Banach stratified groups, or Carnot groups, since the additional condition that $H_1$ Lie generates $\vM$ in the finite dimensional case \cite{FS82,Pan89}, 
can be also stated in the infinite dimensional case. This was already pointed out in the seminal work by M. Gromov, \cite{Gr1}.
However, we will focus our attention on the larger class of Banach homogeneous group targets, 
that presents some additional difficulties in the proof of the almost everywhere differentiability of Lipschitz mappings, as explained below. 

Several examples of infinite dimensional Banach homogeneous groups will be presented in Section~\ref{Sect:infdim}. 
We mainly exploit a natural product construction by means of the Banach spaces $\ell^p$ of
$p$-summable sequences. 
The simplest example of infinite dimensional Banach homogeneous group is the well known 
Heisenberg group modeled on $H^2\times\R$, where $H$ is a real Hilbert space
of scalar product $\lan\cdot,\cdot\ran$. For any $(h_1,h_2,t_1),(h_1',h_2',t_2')\in H^2\times\R$,
the group operation is defined as follows
\begin{equation}\label{HeisQuant}
(h_1,h_2,t_1)(h_1',h_2',t_2')=(h_1+h_1',h_2+h_2',t_2+t_2'+\lan h_1,h_2'\ran-\lan h_2,h_1'\ran).
\end{equation}
This product arises from the quantization relations of the Heisenberg algebra realized in  
Quantum Mechanics, see for instance Chapter XII, Section 3 of \cite{Stein93}.
Notice that this group has an underlying Hilbert space structure. 
In Subsection~\ref{infHeis}, we introduce the infinite product of Heisenberg groups $\vH^\infty$, whose
underlying Banach structure is given by $(\ell^2)^2\times \ell^1$. In Subsection~\ref{infCarnot} we 
present a construction to obtain an infinite product of the same Carnot group. This provides many 
Banach homogeneous groups whose underlying linear space is a genuinely infinite dimensional Banach space
and we will also see that we have some freedom in the choice of the Banach topology.
It is clear that one could use a similar construction also for products of different Carnot groups.
Motivated by the simple case given by \eqref{HeisQuant} that arises 
from the Heisenberg group of Quantum Mechanics, one might also expect further physical
interpretations for special classes of Banach homogeneous groups.

We wish to clarify that the terminology ``Carnot group'' will refer throughout to a finite
dimensional group. The notion of differentiability in Carnot groups has been introduced by P. Pansu. 
In his celebrated 1989 work about the rigidity of both hyperbolic quaternionic spaces and of the Cayley hyperbolic plane, 
he shows that a Lipschitz mapping from an open set of a Carnot group to another Carnot group is almost everywhere
differentiable, see \cite{Pan89}. In Definition~\ref{def:hdiff} we recall this notion of differentiability in our framework.
\begin{The}\label{thm:Pansudiff}
Let $\vM$ be a Banach homogeneous group such that its subspace $H_1$ has the RNP.
If $\G$ is any Carnot group and $A \subset \G$, then any Lipschitz mapping 
$f \colon A \to \vM$ is almost everywhere differentiable.
\end{The}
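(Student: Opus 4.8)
The plan is to follow the scheme of Pansu's theorem: reconstruct the differential of $f$ from the restrictions of $f$ to horizontal lines in $\G$, with the one-dimensional differentiability supplied by the RNP of $H_1$ and with an additional induction on the layers of $\vM$ to cope with the fact that $\vM$ need not be stratified. Since the statement is local and only concerns a.e.\ behaviour, it suffices to prove differentiability at the density-one points of a fixed measurable representative of $A$, so we may localise and, via a Lusin/Egorov reduction, assume that all the a.e.\ properties extracted below hold on a subset exhausting $A$.

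The core step is a one-dimensional lemma: every Lipschitz curve $c\colon I\to\vM$ is, at a.e.\ $t$, differentiable in the sense of Definition~\ref{def:hdiff}, with $\delta_{1/s}(c(t)^{-1}c(t+s))\to\exp(\dot c_1(t))$ for some $\dot c_1(t)\in H_1$. Write $c=c_1+\cdots+c_\iota$ with $c_j$ valued in the closed subspace $H_j$. Applying the homogeneous norm estimate $\|v_j\|\le c_0\|v\|^{j}$ to $v=c(t)^{-1}c(t+s)$ and expanding by the Dynkin formula for the BCH series, the $H_j$-layer of $c(t)^{-1}c(t+s)$ is $c_j(t+s)-c_j(t)$ plus a bracket polynomial in $c_1,\dots,c_{j-1}$ evaluated at $t$ and at $t+s$, and this sum is $O(|s|^{j})$; since that bracket polynomial vanishes when the arguments at $t+s$ are replaced by the arguments at $t$, it is itself $O(|s|)$, so each $c_j$ is Lipschitz into $H_j$. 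Since $H_1$ has the RNP, $c_1$ is differentiable a.e.; one then shows by induction on $j$ that $c_j$ is differentiable a.e., with $\dot c_j$ a universal bracket expression in $c_1,\dots,c_{j-1},\dot c_1,\dots,\dot c_{j-1}$ (a graded contact relation), by subdividing $[t,t+s]$ into $N$ equal parts: the $N$ remainders, each $O((|s|/N)^{j})$, add up to $O(|s|^{j}/N^{j-1})\to0$, while the bracket-polynomial terms converge to a Riemann--Stieltjes (Young) integral against the already controlled lower layers, which uses only that those layers are Lipschitz, not that they have the RNP. Inserting these relations into $s^{-j}(c_j(t+s)-c_j(t)+\text{brackets})$ and expanding to second order shows that this layer tends to $0$ for $j\ge2$, which is the lemma.

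Let $\cg$ be the Lie algebra of $\G$, with first layer $\cg_1$, and let $X_1,\dots,X_m$ be a basis of $\cg_1$. Applying the lemma to the curves $t\mapsto f(x\exp(tX_i))$, followed by a Fubini argument over the cosets transverse to each $X_i$, we obtain that for a.e.\ $x\in A$ and every $i$ the horizontal partial derivative $\partial_{X_i}f(x):=\lim_{s\to0}\delta_{1/s}(f(x)^{-1}f(x\exp(sX_i)))$ exists and lies in $\exp(H_1)$. The assignment $X_i\mapsto\log\partial_{X_i}f(x)$ extends to a bounded linear map $\ell_x\colon\cg_1\to H_1$. Because $\G$ is a \emph{Carnot} group, $\cg_1$ Lie-generates $\cg$, so $\ell_x$ admits at most one extension to a graded Lie algebra homomorphism into $\vM$; that this extension is well defined --- i.e.\ that $\ell_x$ kills the defining relations of $\cg$ --- follows, as in Pansu's argument, from a symmetry-of-mixed-second-derivatives computation using that $f$ is single-valued on $\G$ and a.e.\ differentiable along each relevant pair of directions. (Alternatively one may postpone this and note that the blow-up limit found below is automatically a dilation-equivariant homomorphism.) Exponentiating $\ell_x$ yields a homogeneous group homomorphism $L_x\colon\G\to\vM$, the candidate differential.

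Finally we pass from the horizontal partials to full differentiability at a.e.\ $x$. Fix $x$ to be a density point of $A$, a Lebesgue point of $x\mapsto\ell_x$, and a common point of all the preceding a.e.\ properties. Given $h\in\G$, write $h=\exp(t_1X_{i_1})\cdots\exp(t_NX_{i_N})$ with $N$ and $|t_k|$ bounded in terms of $\|h\|$, which is possible since $\G$ is Carnot, and telescope $f(x)^{-1}f(x\delta_r h)$ into a product, taken in $\vM$, of increments of $f$ along the short horizontal segments joining the consecutive points $x\delta_r(\exp(t_1X_{i_1})\cdots\exp(t_kX_{i_k}))$. Each such increment, rescaled by $\delta_{1/r}$, is close to $L_x(\exp(t_kX_{i_k}))$ by the partial-derivative estimates at the relevant base points --- which lie within $O(r)$ of $x$, hence carry linear maps close to $\ell_x$ by the Lebesgue-point property, extracted through a maximal-function argument --- and multiplying these up in $\vM$, with the homogeneous norm controlling the error accumulated across the finitely many layers, we reach $\prod_k L_x(\exp(t_kX_{i_k}))=L_x(h)$, uniformly for $h$ in compact sets because $N$ and $|t_k|$ are uniformly bounded there. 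The main obstacle is the one-dimensional lemma: as the RNP is assumed only on $H_1$, differentiability in the higher layers $H_2,\dots,H_\iota$ of the non-stratified target cannot be taken for granted and must be propagated upward from $H_1$ through the graded group structure and the homogeneous norm estimates; by contrast the concluding telescoping, though complicated here by the non-commutativity of $\vM$ and by having to control errors across its layers, is in spirit the same as in the classical Carnot-to-Carnot case.
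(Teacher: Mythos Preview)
Your overall architecture matches the paper's: a one-dimensional lemma (Dynkin formula, subdivision, induction on layers, with the RNP on $H_1$ the only analytic input), then Fubini for horizontal partials, then a telescoping along a horizontal word to assemble the h-homomorphism. The gap is in how you handle the arbitrary domain $A\subset\G$. Your one-dimensional lemma is for curves on an interval $I$ with subdivision of $[t,t+s]$ into $N$ equal parts, but the curves $t\mapsto f(x\exp(tX_i))$ live only on $\{t:x\exp(tX_i)\in A\}$, a general measurable subset of $\R$. You cannot extend such a curve to an interval while keeping the $\rho$-Lipschitz bound --- as the introduction notes, $\vM$ need not be connected by rectifiable curves --- and without that bound your key estimate $|\pi_j(c(t_k)^{-1}c(t_{k+1}))|=O((|s|/N)^j)$ fails at subdivision points $t_k$ lying outside $A$. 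The paper's Theorem~\ref{the:Hdiff} circumvents this by extending only in the \emph{Banach} norm (via \cite{JLS86}) so as to have $\dot f_1$ and the Bochner integrals $\int|\dot g_1-\dot g_1(t)|$ available on a full interval, while placing the subdivision points $t_i$ \emph{inside} $A$, near $t+ih/n$, using that $t$ is a density point of $A$; the $\rho$-Lipschitz estimate is then invoked only between consecutive $t_i\in A$.

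The same issue recurs in your telescoping: the intermediate base points $x\delta_r\big(\exp(t_1X_{i_1})\cdots\exp(t_kX_{i_k})\big)$ need not lie in $A$, so neither the increments of $f$ nor any partial derivative is defined there, and the Lebesgue-point property of $x\mapsto\ell_x$ says nothing about such points. The paper handles this by projecting each intermediate point to a nearby point $\eta_i^t=xv_1^t\cdots v_{i-1}^t$ of a compact set $C\subset A$ on which (via Lusin and Severini--Egorov) the partial derivatives exist, vary continuously, and the difference quotients converge uniformly, and by passing through a further nearby point $xv_1^t\cdots v_{i-1}^tw_i^t\in A$ on the correct horizontal line, controlled by the uniform line-density \eqref{eq:uniformdirdens}. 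Your proof is essentially correct when $A$ is open, and its skeleton is the paper's; but the statement is for arbitrary $A$, and it is precisely these two devices --- Banach-norm (not homogeneous) extension combined with density-point subdivision for the curve lemma, and projection of the telescoping base points into $C$ and $A$ --- that are needed to close the argument.
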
 
If the target is either another Carnot group or a finite dimensional graded Lie group, then
this theorem yields the known results on almost everywhere differentiability of Lipschitz mappings.
In fact, all finite dimensional linear spaces have the RNP.
The proof Theorem~\ref{thm:Pansudiff} follows an approach that differs from all the other approaches known to us. 
We only rely on a suitable application of the Dynkin formula that gives an explicit expression for
the addends appearing in the finite expansion of the group operation, \cite{Dyn47}. 
Further technical difficulties arise from the fact that we 
consider a general subset of the Carnot group $\G$. 
In Theorem~\ref{the:Hdiff} we establish the almost everywhere differentiability 
of Lipschitz curves taking values in $\vM$, equipped with the metric structure of homogeneous group,
and being defined on an arbitrary subset of the real line.
This is a nontrivial fact, since Lipschitz extensions of our curve, with respect to the
homogeneous distance $\rho$ on $\vM$, may not exist on an open interval.
In fact, one can easily find finite dimensional homogeneous groups that are not connected by rectifiable curves.
We overcome this difficulty by a suitable use of Lipschitz extensions for Banach-valued Lipschitz curves. This result is due to  W. B. Johnson, J. Lindenstrauss and G. Schechtman, \cite{JLS86}.

Another issue related to Theorem~\ref{thm:Pansudiff}, 
may concern the actual existence of nontrivial Lipschitz mappings. 
We wish to make sure that there are Lipschitz mappings that are not a mere composition 
of a Carnot group-valued Lipschitz mapping with a Lipschitz embedding into a Banach homogeneous group. 
In Subsection~\ref{infLip}, we construct Lipschitz mappings that cannot have the form previously described. In fact, we consider a suitable infinite product of a family of Lipschitz mappings 
$\{f^k\}_{k\geq0}$, under the condition that all vanish at some point. The corresponding product mapping $G$ turns out to be a Lipschitz mapping taking values in $\vH^\infty$. 
Of course, in the case all mappings $f^k$ do not vanish at some point,
the corresponding product mapping $G$ is an example of Lipschitz mapping with {\em infinite dimensional image}.
Since the horizontal layer of $\vH^\infty$ has the RNP, our Theorem~\ref{thm:Pansudiff} shows that
$G$ is also almost everywhere differentiable, when the source space is any Carnot group.
It is worth to mention that all examples of Banach homogeneous groups given in Section~\ref{Sect:infdim}
have the RNP, so in particular their horizontal subspace has also the RNP.

The second part of this work is devoted to an area formula for a rather general class of metric space-valued Lipschitz mappings, that also includes those of Theorem~\ref{thm:Pansudiff}.
For a general metric space target, the choice of the source space is crucial.
In fact, for metric space-valued Lipschitz mappings defined on a subset of a Euclidean space,
B. Kirchheim has established their almost everywhere metric differentiability and also the
corresponding area formula, \cite{Kir94}. 
It is important to remark that Carnot groups have a sufficiently rich structure to introduce the 
notion of metric differentiability, when any of their subsets constitutes the source space, see Definition~\ref{def:metdif}. In this case, the metric differential 
is given by a {\em homogeneous seminorm}, namely a continuous function $s:\G\lra[0,+\infty[$ such that $s(x)=s(x^{-1})$, $s(\delta_rx)=rs(x)$ and $s(x\cdot y)\leq s(x)+s(y)$ for all $x,y\in\G$ and $r>0$.
The additional condition that $s(x)=0$ implies $x=0$ means that $s$ is a {\em homogeneous norm}.

On one side, when the source space is a noncommutative Carnot group, such as the Heisenberg group, 
then counterexamples to the metric differentiability of Lipschitz mappings can be constructed, \cite{KirMag2}. 
On the other side, if we restrict metric differentiability to horizontal directions, then
we still have an almost everywhere (horizontal) metric differentiation for metric space-valued 
Lipschitz mappings on Carnot groups, \cite{Pau01}.

Theorem~\ref{thm:Pansudiff} clearly provides new cases where metric space-valued Lipschitz mappings on Carnot groups are almost everywhere metrically differentiable. Other novel targets where the almost everywhere  metric differentiability holds can be found by another recent result of J. Cheeger and B. Kleiner, \cite{CheKle10b}. 
In fact, one can notice that the seminorm $\|\cdot\|_x$ of Theorem~1.3 in \cite{CheKle10b} can be seen as a homogeneous seminorm on the whole Heisenberg group $\H$, therefore the limit in the statement of this theorem, with $z_2$ equal to the unit element, exactly yields the almost everywhere metric differentiability of Lipschitz mappings from $\H$ to $L^1(0,1)$, see also \cite{CheKle10a}.
We observe that in all previously mentioned cases, where the almost everywhere metric differentiability holds, one can apply the following new metric area formula.
\begin{The}\label{gmetarea}
Let $A\subset\G$ be measurable, let $f:A\lra Y$ be Lipschitz and
almost everywhere metrically differentiable. It follows that 
\begin{equation}\label{gareametric}
\int_A J(mdf(x)) \, d\cH^Q_d(x)=\int_Y N(f,y)\, d\cH^Q_\rho(y)\,,
\end{equation}
where $N(f,y)=\sharp\big(f^{-1}(y)\big)$ for all $y\in Y$ is the multiplicity function,
$d$ is the homogeneous distance of $\G$,
$\rho$ is the metric of $Y$ and $Q$ is the Hausdorff dimension of $\G$.
\end{The}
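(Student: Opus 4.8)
The plan is to carry over B.~Kirchheim's proof of the metric area formula \cite{Kir94} to a Carnot group source; the two genuinely new points are the identification, via uniqueness of Haar measure, of the metric Jacobian of a homogeneous norm with a ratio of Hausdorff measures on $\G$, and a Sard-type estimate showing that the points where $mdf$ degenerates are negligible for both sides of \eqref{gareametric}. Exhausting $A$ by sets of finite $\cH^Q_d$-measure and using monotone convergence, one reduces to the case $\cH^Q_d(A)<\infty$.

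Fix $\ep\in(0,1)$. Using the a.e.\ metric differentiability of $f$, I would build a Borel partition $A=Z\cup N_0\cup\bigcup_k B_k$, up to an $\cH^Q_d$-null set, where $N_0$ is the set of metric non-differentiability points. This is standard: choose a countable family of homogeneous seminorms on $\G$ that is dense among all of them, and stratify the points $x$ by which member $s$ of the family $mdf(x)$ is $\ep$-close to, by the scale at which the remainder in the definition of metric differentiability drops below $\ep\,d$, and---in order to make the bi-Lipschitz constants below uniform---by a lower bound for $mdf(x)$ on the sphere of a reference homogeneous norm; intersecting further with a countable cover of $\G$ by balls of the relevant scale, one obtains countably many Borel pieces, on each of which a single homogeneous seminorm $s$ satisfies
\begin{equation*}
\bigl|\rho(f(y),f(y'))-s(y^{-1}y')\bigr|\le \ep\,d(y,y')\qquad\text{for all }y,y'\text{ in the piece.}
\end{equation*}
Collect into $Z$ the pieces whose $s$ is a degenerate seminorm, so that $W:=\{v:s(v)=0\}$ is a nontrivial closed dilation-invariant, hence graded, subgroup of homogeneous dimension $Q_W\ge1$; call $B_k$ the remaining pieces, whose $s=s_k$ is a homogeneous norm, so that $d_{s_k}(x,y):=s_k(x^{-1}y)$ is a homogeneous distance on $\G$ and, by the displayed inequality and the stratification, $f|_{B_k}$ is a bi-Lipschitz bijection of $(B_k,d_{s_k})$ onto its image with constants within $(1-\eta,1+\eta)$ for some $\eta=\eta(\ep)\to0$. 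Getting this uniform control is the first point requiring care.

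The negligibility of $f(Z)$ is, in my view, the main obstacle. On a piece of $Z$ with associated degenerate $s$ and subgroup $W$, a ball $B_d(x,r)$ can be covered by $\lesssim\ep^{-(Q-Q_W)}$ sets of $d$-diameter $\lesssim r$ and $s$-diameter $\le\ep r$ (refining only the $Q-Q_W$ homogeneous directions transverse to $W$ at scale $\ep r$, and not cutting the $W$-directions, on which $s$ vanishes); by the displayed inequality $f$ maps each such set to one of $\rho$-diameter $\lesssim\ep r$, so that $\cH^Q_\rho\bigl(f(B_d(x,r))\bigr)\lesssim\ep^{-(Q-Q_W)}(\ep r)^Q\le\ep\,r^Q$, using $Q_W\ge1$. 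Summing this over an economical cover of $Z$ by such balls (using Ahlfors $Q$-regularity of $\cH^Q_d$) gives $\cH^Q_\rho(f(Z))\lesssim\ep\,\cH^Q_d(A)$, hence $\cH^Q_\rho(f(Z))=0$ since $\ep$ is arbitrary; likewise $\cH^Q_\rho(f(N_0))=0$ as $\cH^Q_d(N_0)=0$ and $f$ is Lipschitz. The sets $f(Z)$ and $f(B_k)$ are $\cH^Q_\rho$-measurable, being continuous images of Borel subsets of the separable space $\G$.

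It remains to compute on each $B_k$. Any two homogeneous distances on a Carnot group are bi-Lipschitz equivalent---a homogeneous norm being continuous and strictly positive on the compact sphere of a reference norm---so $\cH^Q_{d_{s_k}}$ is a locally finite, strictly positive, left-invariant Radon measure on $\G$ that is homogeneous of degree $Q$ under the dilations; by uniqueness of Haar measure it equals $c(s_k)\,\cH^Q_d$ for a constant $c(s_k)>0$, and one checks from the definition of the metric Jacobian that $c(s_k)=J(s_k)$ and that $s\mapsto J(s)$ is continuous wherever positive. Since a $(1\pm\eta)$-bi-Lipschitz bijection distorts $\cH^Q$ by at most the factor $(1\pm\eta)^Q$, and $mdf\approx s_k$ on $B_k$, it follows that $\cH^Q_\rho(f(B_k))=(1+O(\eta))\int_{B_k}J(mdf(x))\,d\cH^Q_d(x)$, uniformly in $k$. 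Finally, since $f|_{B_k}$ is injective, $f(N_0\cup Z)$ is $\cH^Q_\rho$-null, and $J(mdf)$ vanishes on $Z$,
\begin{equation*}
\int_Y N(f,y)\,d\cH^Q_\rho(y)=\sum_k\cH^Q_\rho(f(B_k))=(1+O(\eta))\int_A J(mdf(x))\,d\cH^Q_d(x),
\end{equation*}
and letting $\ep\to0$ (hence $\eta\to0$) yields \eqref{gareametric}.
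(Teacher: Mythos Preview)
Your strategy is sound and yields a correct proof, but it diverges from the paper's in two places worth noting.

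\textbf{The Sard-type step.} You argue by exploiting the full kernel subgroup $W=\{s=0\}$, its graded structure, and a complementary decomposition $\G=V\cdot W$ to produce the covering bound $N(\ep)\lesssim\ep^{-(Q-Q_W)}$. This works, but it leans on nontrivial structural facts about homogeneous subgroups of Carnot groups (existence of a graded complement, compatibility of the product map with the homogeneous measures) that you should spell out. The paper takes a shorter and more elementary route in Lemma~\ref{LipNegl}: at a degenerate point one fixes a \emph{single} direction $v_x$ with $mdf(x)(v_x)=0$, builds the tube $S_{v_x,\ep}=\bigcup_{0\le r\le r_\ep}B_{x\delta_r v_x,\,\ep r_\ep/L}$ of measure $\gtrsim\ep^{Q-1}r_\ep^Q$, observes that $f(S_{v_x,\ep})$ sits in a $\rho$-ball of radius $2\ep r_\ep$, and concludes via a density differentiation argument for the push-forward measure. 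No subgroup structure, no transversal covering. The paper in fact flags this as a deliberate simplification over Kirchheim's original argument.

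\textbf{The computation on good pieces.} You identify $\cH^Q_{d_{s_k}}=J(s_k)\,\cH^Q_d$ by uniqueness of Haar measure and then control $\cH^Q_\rho(f(B_k))$ directly through the $(1\pm\eta)$-bi-Lipschitz distortion. The paper instead identifies the \emph{measure-theoretic} Jacobian $J_f(x)=\limsup_{r\to0}\cH^Q_\rho(f(D_{x,r}\cap E))/\cH^Q_d(D_{x,r})$ with $J(mdf(x))$ a.e.\ (Lemma~\ref{lem:decJ}) and then invokes an abstract metric area formula from \cite{Mag22}. Your route is self-contained; the paper's is more modular.

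One point of care in your write-up: to get a \emph{uniform} $\eta=\eta(\ep)$ across all pieces $B_k$ you really need multiplicative approximation of homogeneous norms, i.e.\ $(1-\ep)s_k\le mdf(x)\le(1+\ep)s_k$, as in the paper's Lemma~\ref{lma:sepa}. With only additive closeness $|mdf(x)-s|\le\ep\,d$ and a stratification by $\min_{\partial B_1}mdf(x)\ge c$, the bi-Lipschitz constant is $1\pm O(\ep/c)$, which is not uniform in $c$; you would need to let $\ep$ depend on the stratum. Either fix works, but the multiplicative separability lemma is the cleaner choice.
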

As usual, the point of an area formula is its notion of Jacobian.
The {\em metric Jacobian} $J(s)$ of the homogeneous seminorm $s$ is defined as follows
\begin{eqnarray}\label{metJac}
 J(s)=\left\{\begin{array}{ll} \displaystyle \frac{\cH_s^Q(B_1)}{\cH_d^Q(B_1)} & \mbox{if $s$ is a homogeneous norm} \\
 0 & \mbox{otherwise}  \end{array}\right.\,.
\end{eqnarray}
If $\G$ is a Euclidean space, then \eqref{metJac} yields the Jacobian of \cite{Kir94}. If the target is a Banach homogeneous group $\vM$ equipped with a distance $\rho$ given by a homogeneous norm, then
we have to observe that differentiability with differential $L:\G\lra\vM$ implies metric differentiability with homogeneous seminorm $h\to s_L(h)=\rho\big(L(h),0\big)$ with $h\in\G$.
Thus, we get a more explicit formula for \eqref{metJac}, that in the special case 
$\vM$ is another Carnot group fits into the sub-Riemannian Jacobian introduced in \cite{Mag}, see Remark~\ref{rem:BanHomJ} for more comments.

Concerning the proof of \eqref{gareametric}, a substantial difference in our approach with respect to 
that of \cite{Kir94} is in the proof of the negligibility of the image of points where the metric differential is not a homogeneous norm. In \cite{Kir94}, this fact is achieved combining the integral representation of Kirchheim's Jacobian with the use of $\ep$-approximating graph extensions of the mapping, as in \cite{Fed}.
It is somehow surprising that our argument is more elementary, since it only uses the definition of metric differentiability, according to Lemma~\ref{LipNegl}, 
without any use of the notion of metric Jacobian.

We should also mention that the area formula in Carnot groups leads to an algebraic characterization of {\em purely $\G$-unrectifiable} stratified groups, \cite{Mag10}. A purely $\G$-unrectifiable metric space $(Y,\rho)$ has the property that the image of any $Y$-valued Lipschitz mapping from a subset of the $Q$-dimensional Carnot group $\G$ has vanishing Hausdorff measure $\cH^Q_\rho$.

As a byproduct of \eqref{gareametric}, for any fixed Carnot group $\G$, a Banach homogeneous group such that its horizontal subspace has the RNP is purely $\G$-unrectifiable if none of its homogeneous subgroups is isomorphic to $\G$. The trivial case is that of a Banach space with the RNP, that is clearly purely $\G$-unrectifiable whenever $\G$ is noncommutative. For instance, if $\G$ has step higher than two, then any two step Banach homogeneous group whose horizontal subspace has the RNP must be purely $\G$-unrectifiable. Clearly, other analogous cases could be conceived. 
We wish to clarify that in the preceding discussion, we have referred to the notion of 
isomorphism using the h-homomorphisms introduced in Definition~\ref{hhom}.

Finally, we remark that any metric space $Y$ that is purely $\G$-unrectifiable has in particular the property that the group $\G$ cannot admit any bi-Lipschitz embedding into $Y$. This clearly
provides new bi-Lipschitz non-embeddability theorems with infinite dimensional target.

%
%
%
%

\section{Banach homogeneous groups}\label{Sect:infdim}

We start from the notion of {\em Banach Lie algebra}, namely a Banach space $\vM$ equipped with a continuous, bilinear and skew-symmetric mapping $[\cdot,\cdot]:\vM\times \vM\lra\vM$ that satisfies the Jacobi identity.
A {\em nilpotent Banach Lie algebra} $\vM$ is characterized by the existence of a positive integer $\nu\in\N$ 
such that whenever $x_1,x_2,\ldots,x_{\nu+1}\in\vM$, we have
\[
[\cdots[[x_1,x_2],x_3]\cdots], x_\nu],x_{\nu+1}]=0\quad
\]
and there exist $y_1,y_2,\ldots,y_\nu\in\vM$ such that 
\[
[\cdots[[y_1,y_2],y_3]\cdots],y_\nu]\neq0.
\]
The integer $\nu$ is uniquely defined and it gives the {\em step of nilpotence} of $\vM$. 
Therefore the algebra $\vM$ can be equipped with a canonical Banach Lie group operation 
\begin{equation}\label{gopeB}
xy=x+y+\sum_{m=2}^\nu P_m(x,y)\,,
\end{equation}
that is the ``truncated'' Baker-Campbell-Hausdorff series. 
For any $m\geq 2$, the polynomial $P_m$ is given by the {\em Dynkin's formula}
\begin{equation}\label{dynkin}
P_m(x,y)= \mbox{\scriptsize $\displaystyle \sum \frac{(-1)^{k-1}}{k}\frac{m^{-1}}{p_1!q_1!\cdots p_k!q_k!}$}  \; 
\mbox{\small $\underbrace{x\circ\cdots\circ x}_{p_1\; times}\circ\overbrace{y\circ\cdots\circ y}^{q_1\; times}\circ \cdots\circ  \underbrace{x\circ\cdots\circ x}_{p_k\; times}\circ\overbrace{y\circ\cdots\circ y}^{q_k\; times }$ }\,,
\end{equation}
where $x_{i_1}\circ x_{i_2}\circ\cdots \circ x_{i_k}=[\cdots[[x_{i_1},x_{i_2}],x_{i_3}]\cdots],x_{i_k}]$ 
and the sum is taken over the $2k$-tuples $(p_1,q_1,p_2,q_2,\ldots,p_k,q_k)$ such that
$p_i+q_i\geq 1$ for all positive $i,k\in\N$ and $\sum_{i=1}^kp_i+q_i=m$. Notice that 
$P_2(x,y)=[x,y]/2$. 
Formula \eqref{dynkin} was established by E. B. Dynkin in \cite{Dyn47}. 
We say that $\vM$ equipped with the group operation \eqref{gopeB} 
is a {\em Banach nilpotent Lie group}. If we denote by $L(\vM)$ the Lie algebra of $\vM$
as a Lie group, we may wonder whether $L(\vM)$ is isomorphic to $\vM$ seen as a Lie algebra
equipped with the initialliy given Lie product $[\cdot,\cdot]$.
The answer to this question is yes, according to the following proposition, whose proof can be established by
the use of the BCH series for the group expansion.
\begin{Pro}
If $\vM$ is a Banach nilpotent Lie group, then the given Lie algebra structure on $\vM$
is isomorphic to $L(\vM)$.
\end{Pro}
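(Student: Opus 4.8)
The plan is to write down an explicit candidate isomorphism and to check that it respects the bracket by differentiating the Dynkin expansion \eqref{gopeB}--\eqref{dynkin} twice at the origin. Since the unit of the group operation \eqref{gopeB} is $0\in\vM$, the tangent space $T_0\vM$ is canonically the Banach space $\vM$ itself. As usual, $L(\vM)$ is the Banach Lie algebra of left invariant vector fields on $\vM$, and evaluation at the origin, $X\mapsto X(0)$, is a bounded linear bijection of $L(\vM)$ onto $T_0\vM=\vM$; I denote its inverse by $\iota\colon\vM\lra L(\vM)$, so that $X^x:=\iota(x)$ is the unique left invariant vector field with $X^x(0)=x$. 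A left invariant vector field is determined by its value at $0$, hence $\iota$ and $\iota^{-1}$ are both bounded and $\iota$ is automatically a topological linear isomorphism; it then suffices to prove that $[X^x,X^y]=\iota[x,y]$ in $L(\vM)$, and by left invariance this reduces to evaluating both sides at $0$.

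First I would make the left translations explicit. By \eqref{gopeB} every $P_m$ is a finite sum of iterated brackets, hence a continuous $m$-homogeneous polynomial map $\vM\times\vM\lra\vM$, so $(x,y)\mapsto xy$ is a polynomial and therefore smooth, and each left translation $L_g(x)=gx$ is a smooth diffeomorphism with polynomial inverse $L_{g^{-1}}$. Differentiating $t\mapsto g\,(tx)$ at $t=0$, and using that in Dynkin's formula \eqref{dynkin} every term containing no $y$-factor vanishes because $[x,x]=0$, one obtains
\[
X^x(g)=x+\sum_{m=2}^{\nu}Q_m(g,x),
\]
where $Q_m(\cdot\,,x)$ is the sum of those terms of $P_m$ containing exactly one $y$, with $y$ replaced by $x$; in particular $Q_m(\cdot\,,x)$ is $(m-1)$-homogeneous in the first entry, the map $g\mapsto X^x(g)$ is a polynomial, hence smooth, and $Q_2(g,x)=[g,x]/2$ since $P_2(x,y)=[x,y]/2$.

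Next I would compute the bracket at the identity. Since $s\mapsto X^y(sv)=y+\sum_{m\ge2}s^{m-1}Q_m(v,y)$, differentiating at $s=0$ kills all terms but $m=2$, so the Fr\'echet differential of $g\mapsto X^y(g)$ at $g=0$ is $v\mapsto Q_2(v,y)=[v,y]/2$, and likewise for $X^x$. Substituting $X^x(0)=x$ and $X^y(0)=y$ into the coordinate expression for the Lie bracket of vector fields gives
\[
[X^x,X^y](0)=Q_2(x,y)-Q_2(y,x)=\tfrac12[x,y]-\tfrac12[y,x]=[x,y],
\]
so $[X^x,X^y]=\iota[x,y]$ and $\iota$ is an isomorphism of Banach Lie algebras.

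I expect the only genuinely delicate point to be the infinite dimensional bookkeeping: one must check that the finite sum \eqref{gopeB}, and hence the vector fields $X^x$, are honest smooth polynomial maps between Banach spaces, so that the Fr\'echet differentials above exist and are continuous, and that the identification of $L(\vM)$ with $\vM$ is a topological one --- all of which follows from the continuity of $[\cdot,\cdot]$ and the nilpotency that makes \eqref{gopeB} a finite sum. As an alternative that sidesteps vector fields, one can instead read off the bracket of $L(\vM)$ from the mixed second derivative at the origin of the group commutator $(s,t)\mapsto(sx)(ty)(sx)^{-1}(ty)^{-1}$, which by \eqref{gopeB} equals $st\,[x,y]$ up to terms vanishing to higher order at $(0,0)$; this is the same computation, merely organized differently.
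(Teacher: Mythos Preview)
Your argument is correct and is precisely the computation the paper has in mind: the paper does not spell out a proof but only remarks that it ``can be established by the use of the BCH series for the group expansion,'' and your explicit identification $\iota\colon\vM\to L(\vM)$ together with the second-order expansion $P_2(x,y)=[x,y]/2$ is exactly that. The alternative via the group commutator that you mention at the end is equivalent and equally standard.
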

If $S_1,S_2,\ldots,S_n\subset\vX$ are closed subspaces of a Banach space $\vX$ such that the mapping $J:S_1\times\cdots\times S_n\lra\vX$ with $J(s_1,\ldots,s_n)=\sum_{l=1}^n s_l$ is an isomorphism of Banach spaces, then $\vX=S_1\oplus\cdots\oplus S_n$ denotes the corresponding direct sum. 
Any canonical projection on $S_j$ is denoted by $\pi_j:\vX\lra S_j$. 
\begin{Def}\rm 
We say that the Banach space $\vM$ is a {\em Banach homogeneous group} if it is equipped with a 
Banach Lie product $[\cdot,\cdot]:\vM\times\vM\lra\vM$ and there exist $\iota$ closed subspaces 
$H_1,\ldots,H_\iota$ such that $\vM=H_1\oplus\cdots\oplus H_\iota$ and 
whenever $x\in H_i$ and $y\in H_j$ we have $[x,y]\in H_{i+j}$ if $i+j\leq\iota$
and $[x,y]=0$ otherwise. This equips $\vM$ with a special family of Banach isomorphisms
$\delta_r:\vM\lra\vM$, $r>0$, defined by $\delta_rx=r^ix$ if $x\in H_i$ for all $i=1,\ldots,\iota$. 
These mappings are both group and algebra automorphisms of $\vM$ and are called {\em dilations}.
\end{Def}
\begin{Rem}\rm
A Banach homogeneous group can be seen as a {\em Banach graded nilpotent Lie group} equipped with dilations.
This is the natural terminology from the finite dimensional case of graded Lie groups, see \cite{FS82,Good76}.
The decomposition $\vM=H_1\oplus\cdots\oplus H_\iota$ with the properties stated in the previous definition
defines a {\em gradation} of $\vM$.
\end{Rem}
The gradation of $\vM$ along with the Dynkin formula \eqref{dynkin} yields some positive constants $\sigma_1,\ldots,\sigma_\iota$, depending on 
the norm of the Lie product, such that
\begin{equation}\label{Banhom}
\|x\,\|=\max\{\sigma_i|x_i|^{1/i}:\,1\leq i\leq\iota\}
\end{equation}
with $\sigma_1=1$, satisfies $\|\delta_rx\|=r\,\|x\|$ and $\|xy\|\leq\|x\|+\|y\|$. 
We have denoted by $|\cdot|$ the underlying norm on $\vM$ that makes it a Banach space.
This convention will be understood in the sequel.
The properties of $\|\cdot\|$ that we have previously seen, allow us to say that 
$\|\cdot\|$ is a {\em Banach homogeneous norm} of $\vM$. 

If we set $\rho(x,y)=\|x^{-1}y\|$, then we have obtained a left invariant homogeneous distance on $\vM$ with respect to the group operation such that $\rho(\delta_rx,\delta_ry)=r\,\rho(x,y)$ for all
$x,y\in\vM$ and $r>0$. We say that $\rho$ is a {\em Banach homogeneous distance} on $\vM$.
In the sequel, we assume that every Banach homogeneous group $\vM$ is equipped with the
Banach homogeneous norm \eqref{Banhom} and the corresponding homogeneous distance $\rho$, unless otherwise stated.

For the subsequent examples, we recall the standard class of Banach spaces
\[
\ell^p=\bigg\{(x_k)_{k\geq0}\in\R^\N\,:\,\,\sum_{k=0}^\infty |x_k|^p<\infty\bigg\}
\]
where $p\geq1$ is any real number and $|(x_k)_{k\geq0}|_p=(\sum_{k=0}^\infty |x_k|^p)^{1/p}$.

\subsection{Two steps Banach homogeneous groups}

It is not difficult to construct the general model for a two step Banach homogeneous group.
We consider two Banach spaces $\vX$ and $\vT$. We have the Banach space $\vG_2=\vX\oplus\vT$ 
with the product norm. The structure of Banach homogeneous group is given by the 
bounded skew-symmetric bilinear form $\beta \colon \vX \times \vX \to \vT$ via the formula
$[(x,t),(x',t')]=\big(0,\beta(x,x')\big)$ for all $x,x'\in\vX$ and $t,t'\in\vT$.
Thus, the Lie group operation on $\vG_2$ is given by the following formula
\[
\quad (x,t)\cdot(x',t') = (x+x',t+t')+\big[(x,t),(x',t')\big]=(x+x',t+t'+\beta(x,x')).
\]
Let $|(x,t)|=|x|_{\vX}+|y|_{\vT}$ denote the product norm in the Banach space $\vG_2$. 
Let $c>0$ be such that $|\beta(x,x')|_{\vT}\leq c\,|x|_{\vX}|x'|_{\vX}$ for all $x,x'\in \vX$
and fix any constant $\sigma>0$ such that $\sigma\leq \sqrt{2/c}$.
Then the function $\|(x,t)\|=\max\{|x|_{\vX},\sigma|t|_{\vT}^{1/2}\}$ defines a homogeneous norm on $\vG_2$
and clearly for any $r>0$ the group isomorphism $\delta_r(x,t)=(rx,r^2t)$ for $(x,t)\in\vG_2$
is a dilation of $\vG_2$. In sum, only the mapping $\beta$ suffices to equip $\vG_2$ with the
structure of two step Banach homogeneous group.
%
%
%
%
\subsection{An infinite product of Heisenberg groups}\label{infHeis}
We wish to consider a concrete example of nontrivial two step Banach homogeneous group.
This group, that we denote by $\vH^\infty$, can be seen as a suitably topologized 
{\em infinite product of the same Heisenberg group}. 
As a Banach space $\vH^\infty$ coincides with $(\ell^2)^2\times\ell^1$,
where the horizontal subspace is $\vX =(\ell^2)^2$ and $\vT = \ell^1$. 
Any element $x\in\vH^\infty$ correspond to $(x_1,x_2,x_3)$ where $x_i=(x_{ij})_{j\geq 0}$. We also write 
$|x_i|_2=\sqrt{\sum_{j=0}^\infty x_{ij}^2}$ for $i=1,2$ and $|x_3|_1=\sum_{j=0}^\infty |x_{3j}|$.
For any $x,y\in\vH^\infty$, we define the skew-symmetric bilinear mapping $\beta:(\ell^2)^2\times(\ell^2)^2\lra\ell^1$ as follows 
\[
 \beta\big((x_1,x_2),(y_1,y_2)\big) = \big(0,0,(x_{1j}\,y_{2j}-x_{2j}\,y_{1j})_{j\geq0}\big).
\]
It follows that for all $(x_1,x_2),(y_1,y_2)\in(\ell^2)^2$ we have
\[
|\beta\big((x_1,x_2),(y_1,y_2)\big)|_1\leq \big(|x_1|_2^2+|x_2|_2^2\big)^{1/2}
\big(|y_1|_2^2+|y_2|_2^2\big)^{1/2}.
\]
According to the general model of two step Banach homogeneous group, the function
$\|(x_1,x_2,x_3)\|=\max\left\{\sqrt{|x_1|_2^2+|x_2|_2^2}, \sqrt{|x_3|_1}\right\}$
defines a homogeneous norm on $\vH^\infty$.
%
%
%
%
\subsection{An infinite product of Lipschitz maps}\label{infLip}
Let us consider any sequence of Lipschitz mappings $f^k:X\lra\H$, where 
$(X,d)$ is a metric space and $\H$ is the first Heisenberg group 
equipped with the homogeneous norm 
$|(\xi_1,\xi_2,\xi_3)|_\H=\max\Big\{|(\xi_1,\xi_2)|,\sqrt{|\xi_3|}\Big\}$
and the group operation 
$(\xi_1,\xi_2,\xi_3)(\eta_1,\eta_2,\eta_3)=(\xi_1+\eta_1,\xi_2+\eta_2,\xi_3+\eta_3+\xi_1\eta_2
-\xi_2\eta_1)$. We have denoted by $|\cdot|$ both the Euclidean norm in $\R^2$ and in $\R$.
Up to left translations, we can assume that for some $x_0\in X$ we have 
\begin{equation}\label{vanishk}
f^k(x_0)=0 \quad \mbox{ for all }\quad k\in\N.
\end{equation}
Let us define $\Lip(f^k)=\sup_{x,y\in X,\,x\neq y}\{ |f^k(x)^{-1}f^k(y)|_\H/d(x,y)\}$, then
we set $L_k=\Lip(f^k)$ and select any sequence $(r_k)_{k\geq0}$ of positive numbers such that
\begin{equation}\label{l2sum}
C_0=\bigg(\sum_{k=0}^\infty r_k^2\, L_k\bigg)^{1/2}<+\infty\,.
\end{equation}
We wish to construct the infinite product of the mappings $g_k=\delta_{r_k}\circ f^k$, where $k\in\N$.
We expect that the new target is the infinite product $\vH^\infty=(\ell^2)^2\times\ell^1$, defined in Subsection~\ref{infHeis}.
Following the notations of this subsection, we set $f^k(x)=(f^k_{11}(x),f^k_{12}(x),f^k_2(x))\in\H$,
so that 
\[
g_k(x)=\big(r_kf^k_{11}(x),r_kf^k_{12}(x),r_k^2f^k_2(x)\big)\in\H \quad \mbox{for all}\quad k\in\N.
\]
Setting $g^k(x)=(g^k_{11}(x),g^k_{12}(x),g^k_2(x))\in\H$, we define $G_{1j}(x)=(g^k_{1j}(x))_{k\geq0}$ with $j=1,2$ and $G_2(x)=(g_2^k(x))_{k\geq0}$.
Clearly, $\Lip(g^k)=r_kL_k$, therefore condition \eqref{vanishk} yields
\[
\max\bigg\{|g^k_1(x)|,\sqrt{|g^k_2(x)|}\bigg\}=|g^k(x)|_\H\leq r_k\,L_k\,d(x,x_0)
\]
where $g_1(x)=(g^k_{11}(x),g^k_{12}(x))$. By \eqref{l2sum}, it follows that
$G_{11}(x),G_{12}(x)\in\ell^2$ and $G_2(x)\in\ell^1$.
As a consequence, we have that $\big(G_1(x),G_2(x)\big)\in\vH^\infty$
for all $x\in X$, where we have defined $G_1(x)=(G_{11}(x),G_{12}(x))$. 
We use both the norm $\|\cdot\|$ and the group operation introduced in Subsection~\ref{infHeis}
for the Banach homogeneous group $\vH^\infty$. With these notions, for the mapping 
$G:X\lra\vH^\infty$ defined as $G(x)=\big(G_1(x),G_2(x)\big)$ for $x\in X$, we have
\[
\mbox{\small  $\displaystyle \|G(x)^{-1}G(y)\|=\max\Big\{|\!-G_1(x)+G_1(y)|_2,
\sqrt{|\!-G_2(x)+G_2(y)-G_{11}(x)\cdot G_{12}(y)+G_{12}(x)\cdot G_{11}(x)|_1} \Big\}$ }  
\]
where we have used the product $z\cdot w=\sum_{k=0}^\infty z_jw_j e_j\in\ell^1$, 
where $z,w\in\ell^2$ and
$(e_k)_{k\geq0}$ is the canonical Schauder basis of $\ell^1$. 
The condition \eqref{l2sum} finally leads us to the following Lipschitz continuity
\[
\|G(x)^{-1}G(y)\|\leq C_0\,d(x,y)\quad\mbox{for all}\quad x,y\in X.
\]
%
%
%
%
\subsection{Infinite products of Engel groups}
Let us consider the Engel group $\E$ with graded decomposition
$S_1\oplus S_2\oplus S_3$ and {\em graded} basis $(e_{11},e_{12},e_3,e_4)$,
namely $(e_{11},e_{12})$, $(e_3)$ and $(e_4)$ are bases of $S_1$, $S_2$ and $S_3$, respectively.
The only nontrivial bracket relations of $\E$ as a Lie algebra are 
$\cL(e_{11},e_{12})=e_3$ and $\cL(e_{11},e_3)=e_4$, then $\cL:\E\times\E\lra\E$ defines
a Lie product on $\E$. We define $H_1=(\ell^2)^2$, $H_2=\ell^2$, $H_3=\ell^p$ for some $p\geq1$
and set $\vE^\infty=H_1\times H_2\times H_3$.
An element $x$ of $\vE^\infty$ can be written as $(x_1,x_2,x_3)$, where 
$x_1=(x_{11},x_{12})$, $x_{1i}=(x_{1i}^k)_{k\geq0}\in\ell^2$, $i=1,2$,
$x_2=(x_2^k)_{k\geq0}\in\ell^2$ and $x_3=(x_3^k)_{k\geq0}\in\ell^p$. 

For all $\xi,\eta\in\ell^2$, we set $\xi\cdot\eta=\sum_{k=0}^\infty \xi^k\eta^k e^k\in\ell^p$,
where $(e^k)_{k\geq0}$ is the canonical Schauder basis of $\ell^p$. A nice point in the construction
of $\vE^\infty$ is that we do not need to construct the group operation, but it suffices to 
construct a continuous Lie product. Thus, we set
\[
[x,y]=\big(\,0\,,\,x_{11}\cdot y_{12}-x_{12}\cdot y_{11}\,,\, x_{11}\cdot y_2-x_2\cdot y_{11}\, \big).
\]
We fix the product Banach norm $|x|=|x_{11}|_2+|x_{12}|_2+|x_2|_2+|x_3|_p$ and observe that 
\[
|[x,y]|\leq 4 |x|\,|y|
\]
showing the continuity of $[\cdot,\cdot]$ with respect to the Banach norm $|\cdot|$.
The Jacobi identity follows from the one of $\cL(\cdot,\cdot)$.
\begin{Rem}\rm
Notice that the arbitrary choice of $p$ emphasizes, as one could expect, that there are
infinitely many Banach topologies to construct the infinite product of a Carnot group, 
as we will see in the next subsection.
\end{Rem}
%
%
%
%
\subsection{Infinite products of Carnot groups}\label{infCarnot}
The previous cases suggest a general ``product construction'' for any graded group
$\G=S_1\oplus\cdots\oplus S_\upsilon$. Thus, we set 
$\vG^\infty=H_1\times\cdots\times H_\upsilon$, where 
$H_i=(\ell^{p_i})^{n_i}$ and $n_i=\dim S_i$ for all $i=1,\ldots,\upsilon$
and the real numbers $p_i\geq1$, whenever $1\leq i,j\leq\upsilon$ and $i+j\leq\upsilon$,
satisfy the following inequality
\begin{equation}\label{pij}
p_{i+j}\geq\frac{1}{2}\max\{p_i,p_j\}.
\end{equation}
For any $i=1,\ldots,\upsilon$ we set the basis $(e_{i1},\ldots,e_{in_i})$ of $S_i$,
hence $(e_{iu})_{\mbox{\scriptsize $\substack{1\leq i\leq\upsilon \\1\leq u\leq n_i} $}}$ 
is a basis of $\G$.
For an element $x$ of $\vG^\infty$ we will use the equivalent notation
$(x_1,\ldots,x_\upsilon)$, where $x_i=(x_{i1},\ldots,x_{in_i})$ and 
$x_{iu}=(x_{iu}^k)_{k\geq0}\in\ell^{p_i}$. We set the norms
\begin{equation}\label{norms}
|x_i|_{p_i}=\left(\sum_{1\leq u\leq n_i}\sum_{k\geq0} |x_{iu}^k|^{p_i}\right)^{1/p_i}\quad\mbox{and}
\quad |x|=\sum_{i=1}^\upsilon|x_i|_{p_i}.
\end{equation}
Notice that we can also write 
$|x_i|=\left(\sum_{1\leq u\leq n_i} (|x_{iu}|_{p_i})^{p_i}\right)^{1/p_i}$.
Using the previous notation, for any $x\in\vG^\infty$ we set
\[
x^k=\sum_{\mbox{\scriptsize $\substack{1\leq i\leq\upsilon \\1\leq u\leq n_i} $}} x^k_{ij}\, e_{iu}
\in\G.
\]
Following this definition, for any $x,y\in\vG^\infty$ and any $k\in\N$ we set
\[
\cL(x^k,y^k)=\sum_{\mbox{\scriptsize $\substack{1\leq i\leq\upsilon \\1\leq u\leq n_i} $}}
\cL_{iu}^k(x,y)\,e_{iu}.
\]
Taking into account that 
\[
\cL(e_{iu},e_{jv})
=\sum_{\mbox{\scriptsize $\substack{1\leq i,j\leq\upsilon,\; i+j\leq\upsilon,\,1\leq u\leq n_i,\\
1\leq v\leq n_j, 1\leq r\leq n_{i+j} } $}}
\beta^r_{iu,jv}\, e_{(i+j)r}
\]
where the coefficients $\beta^r_{iu,jv}$ determine the Lie algebra structure of $\G$, we have
the formula
\[
\cL^k_{ir}(x,y)=\sum_{\mbox{\scriptsize $\substack{1\leq a,b\leq\upsilon,\; a+b=i,\\
1\leq u\leq n_a,\, 1\leq v\leq n_b} $}}
\beta^r_{au,bv}\,x^k_{au}\,y^k_{bv}
\]
where $k\in\N$, $i=1,\ldots,\upsilon$ and $r=1,\ldots, n_i$.
As a consequence, we can define the elements
\[
\cL_{ir}(x,y)=\big(\cL^k_{ir}(x,y)\big)_{k\geq0} \quad \mbox{and}\quad 
\cL_i(x,y)=\big(\cL_{i1}(x,y),\ldots,\cL_{in_i}(x,y)\big).
\]
By elementary computations, one can check that there exist constants $C_{1i}>0$ such that
\[
|\cL^k_{ir}(x,y)|^{p_i}\leq C_{1i} \sum_{\mbox{\scriptsize $\substack{ 
1\leq a,b\leq\upsilon \\ a+b=i} $}} (|x^k_a|_{p_a})^{p_i}\,(|y^k_b|_{p_b})^{p_i}
\]
where $x^k_a=(x^k_{a1},\ldots,x^k_{an_a})$ and 
$(|x^k_a|_{p_a})^{p_a}=\sum_{r=1}^{n_a}|x^k_{ar}|^{p_a}$ for any $x\in\vG^\infty$
and any $a=1,\ldots,\upsilon$. Thus, we can consider the sum with respect to $k$ and $r$, 
getting constants $C_{2i}>0$ such that
\begin{equation}\label{comp1}
\sum_{1\leq r\leq n_i}\sum_{k=0}^\infty|\cL^k_{ir}(x,y)|^{p_i}\leq C_{2i}
\sum_{\mbox{\scriptsize $\substack{1\leq a,b\leq\upsilon \\ a+b=i} $}}
\sum_{k=0}^\infty (|x^k_a|_{p_a})^{p_i}\,(|y^k_b|_{p_b})^{p_i}\,.
\end{equation}
Finally, we observe that 
\[
\bigg(\sum_{k=0}^\infty (|x^k_a|_{p_a})^{p_i}\,(|y^k_b|_{p_b})^{p_i}\bigg)^{1/p_i}\leq
|\big(|x^k_a|_{p_a}\big)_{k\geq0}|_{2p_i}\,|\big(|y^k_b|_{p_b}\big)_{k\geq0}|_{2p_i}
\]
and the condition \eqref{pij} yields
\[
\sum_{k=0}^\infty (|x^k_a|_{p_a})^{p_i}\,(|y^k_b|_{p_b})^{p_i}\leq
\bigg(|\big(|x^k_a|_{p_a}\big)_{k\geq0}|_{p_a}\,|\big(|y^k_b|_{p_b}\big)_{k\geq0}|_{p_b}\bigg)^{p_i}.
\]
Taking into account \eqref{norms}, we have 
$|\big(|x^k_a|_{p_a}\big)_{k\geq0}|_{p_a}=|x_a|_{p_a}$ 
and $|\big(|y^k_b|_{p_b}\big)_{k\geq0}|_{p_b}=|y_b|_{p_b}$.
As a result, taking into account \eqref{comp1}, we get
\[
\big(|\cL_i(x,y)|_{p_i}\big)^{p_i}=\sum_{1\leq r\leq n_i}
\sum_{k=0}^\infty|\cL^k_{ir}(x,y)|^{p_i}\leq C_{2i}
\sum_{\mbox{\scriptsize $\substack{1\leq a,b\leq\upsilon \\ a+b=i} $}}
(|x_a|_{p_a})^{p_i}\,(|y_b|_{p_b})^{p_i}\leq
C_{2i}\upsilon^2 |x|^{p_i}\,|y|^{p_i}\,,
\]
that immediately implies that 
\[
[x,y]=\big(0,\cL_2(x,y),\ldots,\cL_\upsilon(x,y)\big)\in\vG^\infty 
\quad \mbox{and} \quad
|[x,y]|\leq \mbox{\footnotesize $\displaystyle\sum_{i=2}^\upsilon (C_{2_i}\upsilon^2)^{1/p_i}$} \;|x|\,|y|.
\]
Finally, the Jacobi identity for the product $[\cdot,\cdot]$ follows from the Jacobi 
identity of $\cL$.
\begin{Rem}\rm
It is clear that the previous ``product construction'' can be suitably generalized 
to the cases of different Carnot groups. The obvious case is taking the 
product of $\vG^\infty$ with a different Carnot group $\G_1$, but many other
similar possibilities can arise.
\end{Rem}
%
%
%
%
%
%
%
%
%
%
%
%

\section{Differentiability}

This section is devoted to the proof of Theorem~\ref{thm:Pansudiff}. 
We equip a Carnot group $\G$ with a continuous left invariant distance $d$ such that
$d(\delta_rx,\delta_ry)=rd(x,y)$ for all $x,y\in\G$ and $r>0$, namely, a {\em homogeneous distance}.
The set $B_{x,r}\subset\G$ denotes the open ball of center $x$ and radius $r$ with respect to $d$.
When the center $x$ of the open ball is the origin, namely the unit element of $\G$, we simply
write $B_r$. The same rule is used for closed balls $D_{x,r}$ of center $x$ and radius $r>0$.
%
%
The set of {\em density points} $D(A)$ of $A\subset\G$ is formed by all $x\in\G$ with
\[
 \lim_{r\to0^+}\frac{\cH_d^Q(A\cap B_{x,r})}{\cH_d^Q(B_{x,r})}=1.
\]
In the sequel, $\vM$ is a Banach homogeneous group with gradation $H_1\oplus\cdots\oplus H_\iota$
and equipped with homogeneous norm $\|\cdot\|$ given by \eqref{Banhom}. The Carnot group
$\G$ has the decomposition into the direct sum $S_1\oplus\cdots\oplus S_\upsilon$, where the layers
satisfy the condition $[S_1,S_j]=S_{j+1}$ for all $j=1,\ldots,\upsilon-1$ and 
$[S_1,S_\upsilon]=\{0\}$. 
\begin{Def}\label{hhom}\rm 
A {\em homogeneous homomorphism}, in short h-homomorphism, from $\G$ to $\vM$ is
a continuous Lie group homomorphism $L:\G\lra\vM$ such that 
$L(\delta^\G_rx)=\delta^{\vM}_rL(x)$ for all $x\in\G$ and $r>0$,
where $\delta^{\G}_r$ and $\delta^{\vM}_r$ are dilations in $\G$ and $\vM$, respectively.
\end{Def}
\begin{Def}\label{def:hdiff}\rm 
Let $A\subset\G$ and let $\vM$ be a Banach homogeneous group equipped with a Banach homogeneous 
distance $\rho$. We say that $f:A\lra \vM$ is {\em differentiable} at the density point $x\in A$ 
if there exists an h-homomorphism $L:\G\lra\vM$ such that
\[
\rho(f(x)^{-1}f(xz),L(z))=o\big(d(z,0)\big)
\]
as $z\in x^{-1}A$ and $d(z,0)\to0^+$. 
The mapping $L$ is the differential of $f$ at $x$, that is uniquely defined
and denoted by $Df(x)$.
\end{Def}
In the sequel, saying that $H_1$ {\em has the RNP} precisely means that 
the restriction of the Banach norm of $\vM$ onto $H_1$ turns this closed subspace into
a Banach space with the RNP.
\begin{The}\label{the:Hdiff}
Let $\vM$ be a Banach graded Lie group such that $H_1$ has the RNP.
Let $A \subset \R$ and let $\gamma\colon A\to\vM$ be Lipschitz mapping. 
Then $\gamma$ is almost everywhere differentiable. 
\end{The}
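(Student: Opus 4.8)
The plan is to reduce the statement to the classical Fr\'echet differentiability of Banach-valued Lipschitz curves, using the graded structure of $\vM$ layer by layer. First I would observe that, via the group operation \eqref{gopeB} and the Dynkin formula \eqref{dynkin}, the problem is genuinely about the behaviour of the curve in the first layer $H_1$: once $\pi_1\circ\gamma$ is understood, the higher layers will be controlled by the brackets of lower-layer increments. So the core reduction is to $\gamma_1 := \pi_1\circ\gamma\colon A\to H_1$, which is a Lipschitz map into the Banach space $H_1$ (the projection $\pi_1$ is bounded and the homogeneous norm controls $|\pi_1 x|=|x_1|\le \|x\|$ on the scale-one part). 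The difficulty flagged in the introduction is that $\gamma$ need not admit a Lipschitz extension to an open interval in the homogeneous metric $\rho$, since $\vM$ may fail to be connected by rectifiable curves; however, $\gamma_1$ \emph{is} a Lipschitz map from $A\subset\R$ into a Banach space, so by the Johnson--Lindenstrauss--Schechtman extension theorem \cite{JLS86} it extends to a Lipschitz map $\tilde\gamma_1\colon\R\to H_1$ (with controlled constant). Since $H_1$ has the RNP, $\tilde\gamma_1$ is Fr\'echet differentiable at almost every $t\in\R$, and in particular at almost every density point $t\in A$; at such points, restricting to $A$ shows $\gamma_1$ has a derivative $v(t)\in H_1$ in the sense $|\gamma_1(t+s)-\gamma_1(t)-sv(t)| = o(|s|)$ as $t+s\in A$, $s\to0$.

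Next I would build the candidate differential. Define $L_t\colon\R\to\vM$ by $L_t(s) = \delta_s(v(t)) = s\,v(t)$ for $s\ge0$ — here the exponent-$1$ dilation on $H_1$ is just scalar multiplication — extended to an h-homomorphism from $(\R,+)$ to $\vM$; since $v(t)\in H_1$ and $[H_1\cap\R\text{-line},\,\cdot] $ of a one-parameter object vanishes (the bracket of $v(t)$ with itself is zero), the one-parameter subgroup it generates is simply $s\mapsto s\,v(t)\in H_1\subset\vM$, which is manifestly a Lie group homomorphism intertwining dilations. Thus $L_t = Df$-candidate at $t$. It remains to verify
\[
\rho\big(\gamma(t)^{-1}\gamma(t+s),\, s\,v(t)\big) = o(|s|)
\]
as $t+s\in A$, $s\to0$. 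Writing $w(s) := \gamma(t)^{-1}\gamma(t+s)\in\vM$ and decomposing $w(s) = \sum_{i=1}^\iota w_i(s)$ with $w_i(s)\in H_i$, the homogeneous norm \eqref{Banhom} gives $\rho(w(s),s\,v(t)) = \|{-s\,v(t)}\cdot w(s)\| = \max_i \sigma_i |{\cdot}|^{1/i}$ of the layers of ${-s\,v(t)}\cdot w(s)$. The $i=1$ term is $\sigma_1|w_1(s)-s\,v(t)| = |\pi_1\gamma(t)^{-1}\gamma(t+s) - s v(t)|$; using \eqref{gopeB}, $\pi_1$ of a product is additive on first components, so this equals $|\gamma_1(t+s)-\gamma_1(t)-s v(t)| = o(|s|)$ by the previous paragraph. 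For $i\ge2$ I would argue by induction on $i$: the layer $w_i(s)$ is, by the Dynkin formula, a universal polynomial in the brackets of $w_1(s),\dots,w_{i-1}(s)$ together with $\pi_i$ of the increment; since $\gamma$ is $\rho$-Lipschitz, $\|w(s)\|\le \Lip(\gamma)|s|$, hence $|w_j(s)|\lesssim |s|^j$ for every $j$, and $|w_1(s)-sv(t)| = o(|s|)$, so every monomial in $w_i(s)$ that involves a bracket is $O(|s|^{\ge2})$... and more precisely, because each bracket is multilinear and at least one factor among the lower layers carries an extra power of $|s|$ beyond its ``expected'' degree only in the $w_1$-direction where we have the $o(|s|)$ estimate, one gets $\sigma_i|w_i(s)|^{1/i} = O(|s|) \cdot o(|s|)^{1/i}\cdot(\cdots)$, which is $o(|s|)$. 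The one subtlety is the ``$\pi_i$ of the increment'' part with no bracket: but $w(s)$ arises from a single group increment of a $\rho$-Lipschitz curve, so $\|w(s)\|\le C|s|$ forces $|\pi_i w(s)|\le (C|s|)^i$, contributing $\sigma_i (C|s|)^{i/i} = \sigma_i C|s|$ — this is $O(|s|)$, not $o(|s|)$, so here I must instead use that $\gamma_1$ being differentiable with $\gamma$ Lipschitz forces the higher-layer increments of $\gamma$ to be controlled by those of $\gamma_1$ via the same bracket expansion run in the \emph{forward} direction on $\gamma(t)^{-1}\gamma(t+s)$, closing the induction.

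\textbf{Main obstacle.} The genuinely delicate point is precisely the last one: showing that almost-everywhere Fr\'echet differentiability of the \emph{first-layer} curve $\gamma_1$, combined only with the $\rho$-Lipschitz bound on $\gamma$, propagates to the full statement $\rho(\gamma(t)^{-1}\gamma(t+s),s v(t)) = o(|s|)$ — i.e. controlling all higher layers $H_2,\dots,H_\iota$. This is where the explicit Dynkin expansion \eqref{dynkin} is indispensable: one must expand $\gamma(t)^{-1}\gamma(t+s)$ and show, by induction on the layer index $i$ and using the multilinearity and boundedness of $[\cdot,\cdot]$ on $\vM$ together with the graded homogeneity $[H_a,H_b]\subset H_{a+b}$, that the deviation of the $i$-th layer from its linearization is $o(|s|^i)$, so that after taking $i$-th roots in \eqref{Banhom} everything is $o(|s|)$. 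A subsidiary but routine point to handle carefully is that a full-measure subset of density points of $A$ suffices, so that the conclusion ``$\gamma$ is differentiable almost everywhere'' in the sense of Definition~\ref{def:hdiff} is legitimate; this follows since $\R\setminus D(A)$ and the non-differentiability set of $\tilde\gamma_1$ are both Lebesgue-null.
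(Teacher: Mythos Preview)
Your reduction to the first layer and the use of \cite{JLS86} plus the RNP to get a.e.\ Fr\'echet differentiability of $\tilde\gamma_1$ are exactly right, and your candidate $L_t(s)=s\,v(t)$ is correct. The gap is in the step you yourself flag as the ``main obstacle'': you never give a mechanism by which pointwise differentiability of $\gamma_1$ at $t$, together with the single $\rho$-Lipschitz bound $\|w(s)\|\le C|s|$, forces $|w_i(s)|=o(|s|^i)$ for $i\ge2$. It does not. From the pair $(t,t+s)$ alone you only get $|w_i(s)|\le C^i|s|^i$, and your discussion of the Dynkin expansion does not improve this: the terms with no bracket factor contribute $O(|s|^i)$ and stay there, and the bracket terms are polynomials in the layers of $\gamma(t)$ and $\gamma(t+s)$ (not in $w_1(s),\dots,w_{i-1}(s)$ as you write), so knowing $w_1(s)=sv(t)+o(|s|)$ gives no leverage on the layer-$i$ raw increment $\gamma_i(t+s)-\gamma_i(t)$. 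Your final sentence about ``running the bracket expansion in the forward direction'' does not supply an argument; after taking $i$-th roots you are still at $O(|s|)$, not $o(|s|)$.

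What the paper does instead, and what is genuinely missing from your plan, is a telescoping subdivision together with the Lebesgue point property of the \emph{extended} derivative $\dot{\tilde\gamma}_1$. Working at a density point $t\in A$ that is also a Lebesgue point of $\dot{\tilde\gamma}_1$, one picks for each $n$ points $t_0=t,\,t_1,\dots,t_n=t+h$ with $t_i\in A$ and $|t_i-(t+\tfrac{i}{n}h)|\le h/n$ (possible by density), and writes the $j$-th layer of $g(t+h)=\gamma(t)^{-1}\gamma(t+h)$ as a telescoping sum of $n$ group increments. The pure layer-$j$ part becomes a sum of $n$ terms each of size $O((h/n)^j)$, hence $O(h^j/n^{j-1})$, which tends to $0$ with $n$ for $j\ge2$; the cross bracket terms are controlled, by induction on $j$ and the Dynkin formula, through $\int_{t-h}^{t+h}|\dot{\tilde\gamma}_1-\dot{\tilde\gamma}_1(t)|$, which is $o(h)$ by the Lebesgue point property. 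Letting $n\to\infty$ then yields $|g_j(t+h)|=o(h^j)$. The two ingredients you are missing are thus (i) using Lebesgue points of $\dot{\tilde\gamma}_1$ rather than mere differentiability points, and (ii) the subdivision by intermediate points in $A$, which is precisely what converts the $O(|s|^i)$ coming from a single Lipschitz increment into $o(|s|^i)$.
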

\begin{proof}
We can obviously assume that $A$ is closed, since the target is a complete metric space.
Our Lipschitz bound on $\|\delta_{\frac{1}{h}}\big(\gamma(t)^{-1}\gamma(t+h)\big)\|$
for all $t,t+h\in A$ implies that 
\[
\frac{|\gamma_j(t+h)-\gamma_j(t)+\sum_{m=2}^\iota P_{mj}\big(-\gamma(t),\gamma(t+h)\big)|}{h^j}
\]
is also bounded, where $P_{mj}=\pi_j\circ P_m$. 
It is not restrictive to assume that $h>0$ and $A$ is bounded.
Since $\vM$ is a Banach space and $\gamma$ is also Lipschitz with respect to the Banach norm, 
we can apply the \cite{JLS86}, to get Lipschitz extension $f$ defined on a bounded interval 
containing $A$. In the sequel, we denote by $f_j$ the mapping $\pi_j\circ f$ for all $j+1,\ldots,\iota$.
The Radon-Nikodym property of $H_1$ implies that $f_1$ is a.e. differentiable on the bounded interval
containing $A$.
For a.e. $t$ in the bounded interval, we have
 \begin{equation}\label{intlim}
  \frac1h\int_{t-h}^{t+h} |\dot f_1(s) - \dot f_1(t) |\; ds \to 0\quad\mbox{as}\quad h \to 0^+,
 \end{equation}
where the integral is understood to be the Bochner integral, see \cite[Chapter 5]{BenLin}
for the basic properties of the Bochner integral.
We can restrict our attention to all density points $t$ of $A$ that are also points of 
differentiability and such that $\dot f_1$ satisfies \eqref{intlim}.
Now, we fix any point $t$ of $A$ having these properties.

We consider the left translated $g(s)=f(t)^{-1}f(s)\in\vM$. We also set $g_j=\pi_j\circ g$ for all $j=1,\ldots,\iota$, observing that $g_1(s)=f_1(s)-f_1(t)\in H_1$. 
Both mappings $g$ and $g_1$ are Lipschitz continuous with respect to $d$ and $|\cdot|$, respectively, and have the same Lipschitz constants of $f$ and $f_1$, respectively.

If we fix $n \in \N\sm\{0\}$, then we have $h_{n,t} > 0$, depending on $n$ and $t$, 
such that for all $h \in (A-t) \cap \left]0,h_{n,t}\right[$ we have $\dist(A,t+\frac{i}{n}h)<\frac{i}{n^2}h$. 
Thus, there  exist points $t_i \in A$, for $i = 0, 1, \dots, n$, so that 
\begin{equation}\label{tiA}
  \left|t + \frac{i}{n}h - t_i\right| \le \frac{h}{n},
\end{equation}
 $t_0 = t$ and $t_n = t + h$. We write $g_j(t+h)=A_j-B_j$, where 
\begin{eqnarray}\label{AjBj}
\left\{\begin{array}{l}
A_j=\sum_{i=0}^{n-1}\Big(g_j(t_{i+1})-g_j(t_i)+\sum_{m=2}^\iota P_{mj}(-g(t_i),g(t_{i+1})\big)\Big) \\
B_j=\sum_{i=0}^{n-1}\sum_{m=2}^\iota P_{mj}(-g(t_i),g(t_{i+1})\big)
\end{array}\right.\,.
\end{eqnarray}
We observe that 
\[
|A_j|\leq \sum_{i=0}^{n-1}|\pi_j\big(g(t_i)^{-1}g(t_{i+1})\big)|=
 \sum_{i=0}^{n-1}|\pi_j\big(f(t_i)^{-1}f(t_{i+1})\big)|,
\]
hence $A_j$ is bounded by $\sum_{i=0}^{n-1}|t_{i+1}-t_i|^j$, up to a constant factor only depending on the
Lipschitz constant of $f$. Since $|t_{i+1}-t_i|\leq 3h/n$, we get a constant $\kappa_{1j}>0$ such that
\begin{equation}\label{aj}
|A_j|\leq \kappa_{1j}\; h^j/ n^{j-1}.
\end{equation}
We denote by $l_0>0$ a number only depending on the Lipschitz constant of $f$ and such that 
\[
\max\{\|f(t')^{-1}f(t'')\|,|f(t')-f(t'')|,|f_1(t')-f_1(t'')|,\ldots,|f_\iota(t')-f_\iota(t'')|\}
\leq l_0\; |t'-t''|
\]
for all $t',t''$ belonging to the bounded interval containing $A$. 
By the Dynkin formula \eqref{dynkin}, the term $P_{mj}(-g(t_i),g(t_{i+1}))$ in \eqref{AjBj} can be written
as a linear combination of 
\begin{equation}
\pi_j\,\Big(\Delta_i\circ g(\tau_{1,i})\circ\cdots\circ g(\tau_{m-2,i})\Big)\,,
\end{equation}
where $\Delta_i\in\{-g(t_i)\circ g(t_{i+1}),g(t_{i+1})\circ\big(-g(t_i)\big)\}$ and $\tau_{1,i},\ldots,\tau_{m-2,i}\in\{t_i,t_{i+1}\}$. 
Therefore we have $\Delta_i\in\{\pm g(t_i)\circ g(t_{i+1})\}$
and the previous term can be written as
\begin{equation}\label{lcmb}
\pm \pi_j\,\Big(g(t_i)\circ \Big(g(t_{i+1})-g(t_i)\Big)\circ g(\tau_{1,i})\circ\cdots\circ g(\tau_{m-2,i})\Big)\,.
\end{equation}
Up to a change of sign, this term is the sum of elements 
\begin{equation}
\Big(g_{l_1}(t_{i+1})-g_{l_1}(t_i)\Big)\circ g_{l_2}(t_i)\circ g_{l_3}(\tau_{1,i})\circ\cdots\circ g_{l_m}(\tau_{m-2,i})\,,
\end{equation}
where $1\leq l_1,\ldots,l_m<j$ and $l_1+\cdots+l_m=j$.
Recall that the range of $m\in\N$ is $2\leq m\leq j$.
We start with the case $m=j$, where
\begin{equation}
\Big(g_1(t_{i+1})-g_1(t_i)\Big)\circ g_1(t_i)\circ g_1(\tau_{1,i})\circ\cdots\circ g_1(\tau_{m-2,i})\,.
\end{equation}
We can write this element as follows 
\[
\bigg(\int_{t_i}^{t_{i+1}}\dot g_1\bigg) \circ \bigg[\bigg(\int_t^{t_i}\dot g_1-\dot g_1(t)\bigg)
+(t_i-t)\dot g_1(t) \bigg]\circ g_1(\tau_{1,i})\cdots\circ g_1(\tau_{m-2,i}).
\]
This can be considered as the sum of 
$
\Big(\int_{t_i}^{t_{i+1}}\dot g_1\Big) \circ \Big(\int_t^{t_i}\dot g_1-\dot g_1(t)\Big)
\circ g_1(\tau_{1,i})\circ\cdots\circ g_1(\tau_{m-2,i})
$
and $\Big(\int_{t_i}^{t_{i+1}}\dot g_1\Big) \circ \Big((t_i-t)\dot g_1(t) \Big)\circ
g_1(\tau_{1,i})\circ\cdots\circ g_1(\tau_{m-2,i}).
$
The norm of the first element is not larger than
$\displaystyle 
l_0^{j-2}\,h^{j-2}\,\bigg(\int_{\min\{t_i,t_{i+1}\}}^{\max\{t_i,t_{i+1}\}}|\dot g_1|\bigg)
\, \bigg(\int_t^{t+h}|\dot g_1-\dot g_1(t)|\bigg)$,
hence we get  
{\small
\begin{equation}
\bigg|\Big(\int_{t_i}^{t_{i+1}}\dot g_1\Big) \circ \Big(\int_t^{t_i}\dot g_1-\dot g_1(t)\Big)
\circ \cdots\circ g_1(\tau_{m-2,i})\bigg|\leq l_0^{j-2} h^{j-2}\,\bigg(\int_{t+\frac{(i-1)}{n}h}^{t+\frac{(i+2)}{n}h}\big|\dot g_1\big|\bigg)
\bigg(\int_t^{t+h}|\dot g_1-\dot g_1(t)|\bigg).
\end{equation}
}
The second one can be written as 
\[
\bigg(\int_{t_i}^{t_{i+1}}\dot g_1-\dot g_1(t)\bigg) \circ \bigg((t_i-t)\dot g_1(t) \bigg)\circ
g_1(\tau_{1,i})\circ\cdots\circ g_1(\tau_{m-1,i})\,,
\]
hence its norm is less than or equal to 
$\displaystyle l_0^{j-1} h^{j-1}\,\int_{t+\frac{(i-1)}{n}h}^{t+\frac{(i+2)}{n}h}
\big|\dot g_1-\dot g_1(t)\big|$.
We have proved that
$|g_1(t_{i+1})\circ g_1(t_i)\circ g_1(\tau_{1,i})\circ\cdots\circ g_1(\tau_{m-2,i})|$ is less than
\[
 l_0^{j-2} h^{j-2}\,\bigg(\int_{t+\frac{(i-1)}{n}h}^{t+\frac{(i+2)}{n}h}\big|\dot g_1\big|\bigg)
\bigg(\int_t^{t+h}|\dot g_1-\dot g_1(t)|\bigg)
+l_0^{j-1} h^{j-1}\,\int_{t+\frac{(i-1)}{n}h}^{t+\frac{(i+2)}{n}h}\big|\dot g_1-\dot g_1(t)\big|.
\]
This proves that 
\begin{equation}\label{estim11}
 |g_1(t_{i+1})\circ g_1(t_i)\circ\cdots\circ g_1(\tau_{m-2,i})|\leq 
3\,l_0^{j-1}\,h^{j-1}\bigg(\frac{1}{n}\int_t^{t+h}|\dot g_1-\dot g_1(t)| +
\int_{t+\frac{(i-1)}{n}h}^{t+\frac{(i+2)}{n}h}\big|\dot g_1-\dot g_1(t)\big|\bigg)\,.
\end{equation}
Notice that in the case $j=2$, the estimate \eqref{estim11} can be read as 
\begin{equation}\label{estim112}
\Big|\sum_{m=2}^2 P_{m2}(-g_1(t_i),g_1(t_{i+1}))\Big|\leq 
\frac{3}{2}\,l_0\,h\;\bigg(\frac{1}{n}\int_t^{t+h}|\dot g_1-\dot g_1(t)| +
\int_{t+\frac{(i-1)}{n}h}^{t+\frac{(i+2)}{n}h}\big|\dot g_1-\dot g_1(t)\big|\bigg)\,,
\end{equation}
for all $0<h<h_{n,t}$, $t+h\in A$ and all possible choices of $t_i\in A$ satisfying \eqref{tiA},
that clearly also depend on $h$.
Arguing by induction, suppose that 
\begin{equation}\label{estimj}
\Big|\sum_{m=2}^l P_{ml}(-g(t_i),g(t_{i+1}))\Big|\leq 
\kappa_{2l}\,h^{l-1}\;\bigg(\frac{h}{n^2}+\frac{1}{n}\int_{t-h}^{t+h}|\dot g_1-\dot g_1(t)| +
\int_{t+\frac{(i-1)}{n}h}^{t+\frac{(i+2)}{n}h}\big|\dot g_1-\dot g_1(t)\big|\bigg)
\end{equation}
holds for all $l=2,\ldots,j$, for all $t,t+h\in A$, $0<h<h_{n,t,j}$ and any choice of $t_i\in A$ satisfying \eqref{tiA} for all $i=1,\ldots,n-1$, $t_0=t$ and $t_n=t+h$. 
In view of \eqref{estim112}, the induction hypothesis is true for $j=2$.
Thus, for any $t\in A$, we have to find an $h_{n,t,j+1}>0$ such that
\begin{equation}\label{indj+1}
\Big|\sum_{m=2}^{j+1} P_{m(j+1)}(-g(t_i),g(t_{i+1}))\Big|\leq \kappa_{2(j+1)}\,h^j\;\bigg(\frac{h}{n^2}+\frac{1}{n}\int_{t-h}^{t+h}|\dot g_1-\dot g_1(t)| +
\int_{t+\frac{(i-1)}{n}h}^{t+\frac{(i+2)}{n}h}\big|\dot g_1-\dot g_1(t)\big|\bigg)
\end{equation}
for any choice of $t_i\in A$ satisfying \eqref{tiA} and all $0<h<h_{n,t,j+1}$ such that $t+h\in A$.
Since $g_l(t+h)=A_l+B_l$, we observe that our inductive assumption yields
\begin{equation}\label{indl}
|g_l(t+h)|\leq 3\,\kappa_{2l}\,h^{l-1}\,\int_{t-h}^{t+h}|\dot g_1-\dot g_1(t)|+
(\kappa_{1l}+\kappa_{2l})\;\frac{ h^l}{n}
\end{equation}
for any $t,t+h\in A$ with $0<h<h_{n,t,j}$ and all $l=2,\ldots,j$, where $j\geq 2$.
Arguing as in the previous steps, by the Dynkin formula, the single addends 
$P_{m(j+1)}(-g(t_i),g(t_{i+1}))$ with $2\leq m\leq j$ are finite sums of elements
\begin{equation}\label{termj+1}
\Big(g_{l_1}(t_{i+1})-g_{l_1}(t_i)\Big)\circ g_{l_2}(t_i)\circ g_{l_3}(\tau_{1,i})\circ\cdots\circ g_{l_m}(\tau_{m-2,i})\,,
\end{equation}
where $1\leq l_1,\ldots,l_m\leq j$, $l_1+\cdots+l_m=j+1$ and $2\leq m\leq j+1$.
If $m=j+1$, then the general validity of \eqref{estim11} in our case gives 
\begin{equation}
 |g_1(t_{i+1})\circ g_1(t_i)\circ\cdots\circ g_1(\tau_{m-2,i})|\leq 
3\,l_0^j\,h^j\;\bigg(\frac{1}{n}\int_{t-h}^{t+h}|\dot g_1-\dot g_1(t)| +
\int_{t+\frac{(i-1)}{n}h}^{t+\frac{(i+2)}{n}h}\big|\dot g_1-\dot g_1(t)\big|\bigg)\,.
\end{equation}
for all $0<h<h_{n,t}$ and all $t_i\in A$ satisfying \eqref{tiA}.
Let us now consider the case $2\leq m<j+1$, where we have to apply our inductive hypothesis. 
Concerning \eqref{termj+1} we have two main cases. 

The first one is when $l_1=1$. Thus, we consider the set $J_0$ 
all integers $j\in\{2,3,\ldots,m\}$ such that $l_j\geq2$. Since $m<j+1$, we have $J_0\neq\emptyset$.
Precisely, $J_0$ is made by $p$ distinct elements $\{j_1,\ldots,j_p\}$ with $1\leq p\leq m-1$. 
We also set $I_0=\{2,\dots,m\}\sm J_0$. If $p<m-1$, namely $I_0\neq\emptyset$, then
$I_0=\{i_1,\ldots,i_{m-p-1}\}$ and $l_i=1$ for all $i\in I_0$. 
By the consequence \eqref{indl} of the inductive assumption, since $0<t_{i+1}-t,t_i-t\leq h<h_{n,t,j}$,
it follows that the norm of \eqref{termj+1} is less than or equal to
\[
l_0\,|t_{i+1}-t_i|\;\Big(l_0h\Big)^{m-p-1}\prod _{l=l_{j_1},\ldots,l_{j_p}}
\bigg(3\,\kappa_{2l}\,h^{l-1}\,\int_{t-h}^{t+h}|\dot g_1-\dot g_1(t)|+(\kappa_{1l}+\kappa_{2l})\;\frac{ h^l}{n}\bigg)
\]
for all $0<h<h_{t,n,j}$, with $t+h\in A$. Thus, there exists $\kappa_3(m,p)>0$, only
depending on $l_0$, $\kappa_{1l}$ and $\kappa_{2l}$, for all $l=2,\ldots,j$, such that 
\begin{equation}\label{casel1}
\mbox{\footnotesize $\displaystyle \Big|\Big(g_{l_1}(t_{i+1})-g_{l_1}(t_i)\Big)\circ g_{l_2}(t_i)
\circ g_{l_3}(\tau_{1,i})\circ\cdots\circ g_{l_m}(\tau_{m-2,i})\Big|$}\leq
\kappa_3(m,p)\, h^j\;\bigg(\frac{1}{n}\int_{t-h}^{t+h}|\dot g_1-\dot g_1(t)|+\frac{h}{n^2}\bigg).
\end{equation}
Explicitly, we can choose 
$
\kappa_3(m,p)=9\,l_0^{m-p}\,(12 l_0+1)^{p-1}\,\Big(\max_{2\leq l\leq j}\; \kappa_{1l}+\kappa_{2l}\Big)^p\,.
$

The remaining case is $l_1\geq 2$. 
We observe that for the integers $l$ such that $2\leq l\leq\iota$, 
the consequence \eqref{indl} of the inductive assumption yields  
\begin{equation}\label{estind}
|g_l(t')|\leq (t'-t)^l (\kappa_{1l}+\kappa_{2l}+12\,\kappa_{2l}\,l_0)
\end{equation}
for all $t'\in A$ such that $0<t'-t<h_{n,t,j}$. In the case $l=1$, we have 
$|g_1(t')|\leq l_0\,(t'-t)$ for all $t'\in A\cap (t,+\infty)$.
Now, the general term \eqref{termj+1} can be written as the sum of
\begin{equation}\label{fprod}
\Big(g_{l_1}(t_{i+1})-g_{l_1}(t_i)+\sum_{m=2}^{l_1}P_{ml_1}\big(-g(t_i),g(t_{i+1})\big)\Big)\circ g_{l_2}(t_i)\circ g_{l_3}(\tau_{1,i})\circ\cdots\circ g_{l_m}(\tau_{m-2,i})
\end{equation}
and of
\begin{equation}\label{estim_l1}
\Big(\sum_{m=2}^{l_1}P_{ml_1}\big(-g(t_i),g(t_{i+1})\big)\Big)\circ g_{l_2}(t_i)\circ g_{l_3}(\tau_{1,i})\circ\cdots\circ g_{l_m}(\tau_{m-2,i}).
\end{equation}
Since the first factor of \eqref{fprod} is $\pi_{l_1}\Big(g(t_i)^{-1}g(t_{i+1})\Big)$, 
the norm of \eqref{fprod} is not greater than 
\begin{equation}\label{estfprod}
l_0^{l_1}\frac{|t_i-t_{i+1}|^{l_1}}{\sigma_{l_1}}(\kappa_{1l}+\kappa_{2l}+l_0+12\kappa_{2l}l_0)^{m-1}\;h^{l_2+\cdots+l_m}\,.
\end{equation}
Now, we observe that the norm of \eqref{estim_l1} is less than or equal to the following number
\begin{equation}\label{secadd}
\Big|\sum_{m=2}^{l_1}P_{ml_1}\big(-g(t_i),g(t_{i+1})\big)\Big|\, |g_{l_2}(t_i)|\,|g_{l_3}(\tau_{1,i})|\;
\cdots\; |g_{l_m}(\tau_{m-2,i})|\,.
\end{equation}
By the induction hypothesis \eqref{estimj}, the first factor of this product is less than or equal to 
\[
\kappa_{2l_1}\,h^{l_1-1}\;\bigg(\frac{h}{n^2}+\frac{1}{n}\int_{t-h}^{t+h}|\dot g_1-\dot g_1(t)| +
\int_{t+\frac{(i-1)}{n}h}^{t+\frac{(i+2)}{n}h}\big|\dot g_1-\dot g_1(t)\big|\bigg)\,,
\]
hence, as before, taking into account \eqref{indl}, the product \eqref{secadd} is not larger than
\begin{equation}\label{estestim_l1}
\kappa_{2l_1}\,h^{l_1-1}\;\bigg(\frac{h}{n^2}+\frac{1}{n}\int_{t-h}^{t+h}|\dot g_1-\dot g_1(t)| +
\int_{t+\frac{(i-1)}{n}h}^{t+\frac{(i+2)}{n}h}\big|\dot g_1-\dot g_1(t)\big|\bigg)\,
\kappa_4(m)\;h^{l_2+\cdots+l_m}\,,
\end{equation}
where $\kappa_4(m)=(\kappa_{1l}+\kappa_{2l}+l_0+12\kappa_{2l}l_0)^{m-1}$.
Taking into account the decomposition of \eqref{termj+1} into the sum of 
\eqref{fprod} and \eqref{estim_l1} and using the estimates \eqref{estfprod} 
and \eqref{estestim_l1}, we get a geometric constant $\kappa_5(m)$, only depending
on $l_0$ and all $\kappa_{1,l}$ and $\kappa_{2l}$ with $l=2,\ldots,j$, such that
\begin{equation}\label{estin-termj+1}
\mbox{\scriptsize $\displaystyle \frac{\Big|\Big(g_{l_1}(t_{i+1})-g_{l_1}(t_i)\Big)\circ g_{l_2}(t_i)\circ\cdots\circ g_{l_m}(\tau_{m-2,i})\Big|}{\kappa_5(m)\,h^j}\leq $}\;
\mbox{\small $\displaystyle \frac{h}{n^2}+\frac{1}{n}\int_{t-h}^{t+h}|\dot g_1-\dot g_1(t)| +
\int_{t+\frac{(i-1)}{n}h}^{t+\frac{(i+2)}{n}h}\big|\dot g_1-\dot g_1(t)\big|$ }
\end{equation}
in the case $l_1\geq2$. Joining both cases $l_1=1$ and $l_1\geq 2$, namely, joining
\eqref{casel1} with \eqref{estin-termj+1}, we get a new constant 
$\kappa_6(m)\geq\kappa_5(m)$ depending on the same constants of $\kappa_5(m)$, such that
\begin{equation}\label{fin-estin-termj+1}
\mbox{\scriptsize $\displaystyle \frac{\Big|\Big(g_{l_1}(t_{i+1})-g_{l_1}(t_i)\Big)\circ g_{l_2}(t_i)\circ\cdots\circ g_{l_m}(\tau_{m-2,i})\Big|}{\kappa_6(m)\,h^j}\leq $}\;
\mbox{\small $\displaystyle \frac{h}{n^2}+\frac{1}{n}\int_{t-h}^{t+h}|\dot g_1-\dot g_1(t)| +
\int_{t+\frac{(i-1)}{n}h}^{t+\frac{(i+2)}{n}h}\big|\dot g_1-\dot g_1(t)\big|$ }
\end{equation}
whenever $\,2\leq m\leq j+1$, $l_1,l_2,\ldots,l_m\geq 1$, $l_1+l_2+\cdots+l_m=j+1$, $t,t+h\in A$, $0<h<\min\{h_{n,t,j},h_{n,t}\}$ and $t_i\in A$ satisfy \eqref{tiA} 
for all $i=1,\ldots,n-1$, where $t_0=t$ and $t_n=t+h$. Thus, under the same conditions, since
$P_{m(j+1)}(-g(t_i),g(t_{i+1}))$ is a finite linear combination of elements \eqref{termj+1}, 
we also have
\[
|P_{m(j+1)}(-g(t_i),g(t_{i+1}))|\leq \kappa_7(m)\,h^j\,\bigg(
\frac{h}{n^2}+\frac{1}{n}\int_{t-h}^{t+h}|\dot g_1-\dot g_1(t)| +
\int_{t+\frac{(i-1)}{n}h}^{t+\frac{(i+2)}{n}h}\big|\dot g_1-\dot g_1(t)\big|\bigg)
\]
for a suitable constant $\kappa_7(m)>0$, depending on $\kappa_6(m)$.
This immediately leads us to \eqref{indj+1} and concludes our argument by induction.
As a consequence, taking into account that $g_j(t+h)=A_j+B_j$ and 
$B_j=\sum_{i=0}^{n-1}\sum_{m=2}^jP_{mj}(-g(t_i),g(t_{i+1}))$, we get
\begin{equation}
|g_j(t+h)|\leq 3\,\kappa_{2j}\,h^{j-1}\,\int_{t-2h}^{t+2h}|\dot g_1-\dot g_1(t)|+
(\kappa_{1j}+\kappa_{2j})\;\frac{ h^j}{n}
\end{equation}
for all $j=2,\ldots,\iota$ and all $0<h<h_{n,t,j}$, that immediately leads us to the conclusion.
\end{proof}

%
%
%
%

\medskip

\begin{proof}[Proof of Theorem~\ref{thm:Pansudiff}]
We can clearly assume that $A$ is closed, since $f$ is Lipschitz and the target is 
a complete metric space. It is also not restrictive to assume that $A$ is also bounded. 
Let $v_1, v_2, \dots, v_N \in S_1$ be a set of horizontal directions of $\G$
such that $|v_i|=1$ for all $i=1,\ldots,N$ and for some $T>0$ we have that
 \[
  V = \{\delta_{t_1}v_1\cdots\delta_{t_N}v_N ~:~ |t_i|<T\}  
 \]
 is a neighbourhood of the origin containing $B_1$. For all $x \in \G$ and 
 $v \in \G\sm\{0\}$ we write the one dimensional set of parameters for which we hit 
 the set $A$ as $A(x,v) = \{t\in\R ~|~ x\delta_tv \in A\}$.
 Take one of the directions $v_i$ and denote by $Z_i$ the one-dimensional subgroup spanned by it. Let $W_i$ be the complementary subgroup, so that $\G$ is the semidirect product of $Z_i$ and $W_i$. 
 Then by Theorem~\ref{the:Hdiff} for any $x \in W_i$ the limit
 \begin{equation}\label{eq:directional}
  \lim_{\substack{t\to 0\\t \in A(x\delta_sv_i,v_i)}}
        \delta_{\frac{1}{t}}\big(f(x\delta_sv_i)^{-1} f(x\delta_sv_i \delta_tv_i)\big)
 \end{equation}
 exists for $\cL^1$-almost every $s \in A(x,v_i)$. Considering $W_i$ as a vector space and using 
 the  Fubini's theorem, we get the existence of the limit in \eqref{eq:directional}
 for $\cL^n$-almost every $(x,s) \in W_i\times \R$ for which $x\delta_sv_i \in A$.
 
Since both $\cH^Q$ with respect to the homogeneous distance $d$ and $\cL^n$ with respect
to the understood Euclidean metric on $\G$ are Haar measures of $\G$, 
applying the previous argument to all directions $v_i$, for $\cH^Q$-almost all $x \in A$ all the limits
 \begin{equation}\label{eq:uniform}
  \partial_{v_i}f(x) = \lim_{\substack{t\to 0\\t \in A(x,v_i)}}
        \delta_{\frac{1}{t}}\big(f(x)^{-1} f(x \delta_tv_i)\big) 
 \end{equation}
 with $i=1,2,\dots, N$ exist. Let us fix any $\epsilon>0$. Since $A$ is bounded, in view of both 
Lusin and Severini-Egorov theorems, we have a compact set $C \subset A$, made of density points,
such that for all $i=1,2, \dots, N$ 
the limits $\partial_{v_i}f(x)$ exist at every point $x \in C$, the convergence 
\[
\delta_{\frac{1}{t}}\big(f(x)^{-1} f(x \delta_tv_i)\big)\to \der_{v_i}f(x)\quad\mbox{as}\quad t\to0
\quad\mbox{and}\quad t\in A(x,v_i)
\]
is uniform on $C$, the maps $x \mapsto \partial_{v_i}f(x)$ are continuous on $C$, the convergence of the densities
 \begin{equation}\label{eq:uniformdirdens}
  \frac{\cH^1(B(x,r)\cap A \cap \{x\delta_tv_i : t \in \R\})}{2r} \to 1
 \end{equation}
as $r \downarrow 0$ is uniform for $x \in C$ and finally $\cH^Q(A\setminus C)< \epsilon$.
Let us choose $x\in C$, hence we know that
 \begin{equation}\label{eq:Cdensity}
   \lim_{d(0,z) \to 0}\frac{d(C,xz)}{d(0,z)} = 0.
 \end{equation}
Now, we choose $u = \delta_{t_1}v_1\cdots\delta_{t_N}v_N$ with $|t_i|<T$ and $t\in(-1,1)$.
Thus, we are able to find a sequence $v_1^t, \dots, v_N^t \in \G$ so that 
\begin{equation}\label{eq:dens}
xv_1^t\cdots v_i^t \in C\quad\mbox{ and }\quad 
 d(v_i^t,\delta_{tt_i}v_i) = d(C,xv_1^t\cdots v_{i-1}^t \delta_{tt_i}v_i)
\end{equation}
for all $i = 1, \dots, N$. Such a sequence exists since $C$ is compact.
We will also use the elements $w_1^t, \dots, w_N^t \in \G$ such that
for all $i = 1, \dots, N$ we have $xv_1^t\cdots v_{i-1}^tw_i^t \in A$,
\[
w_i^t \in \{\delta_h v_i : h \in \R\}\quad\mbox{and}\quad
  d(w_i^t,\delta_{tt_i}v_i) = d(\{xv_1^t\cdots v_{i-1}^t \delta_{h}v_i : h \in \R\}\cap A,xv_1^t\cdots v_{i-1}^t \delta_{tt_i}v_i).
 \]
Such a sequence exists because $A$ is closed and \eqref{eq:uniformdirdens} uniformly holds. 
Moreover, the same uniform convergence of \eqref{eq:uniformdirdens} yields 
 \begin{equation}\label{eq:huniform}
  \frac{d(w_i^t,\delta_{tt_i}v_i)}{t} \to 0
 \end{equation}
uniformly with respect to $x$ that varies in $C$, $|t_i|<T$ and $i=1,\ldots,N$, as $t\to0$. 
Notice that we have not emphasized the dependence on $x$.
The different sets of projected elements $\{v_i^t\}$ and $\{w_i^t\}$ are illustrated in Figure \ref{fig:Pansu}.
\begin{figure}
   \psfrag{a1}{$x\delta_{tt_1}v_1$}
   \psfrag{b1}{$xv_1^t$}
   \psfrag{a2}{$x\delta_{tt_1}v_1\delta_{tt_2}v_2$}
   \psfrag{b2}{$xv_1^tv_2^t$}
   \psfrag{c2}{$xv_1^tw_2^t$}
   \psfrag{x}{$x$}
   \psfrag{A}{$A$}
   \psfrag{C}{$C$}
   \centering
   \includegraphics[width=0.8\textwidth]{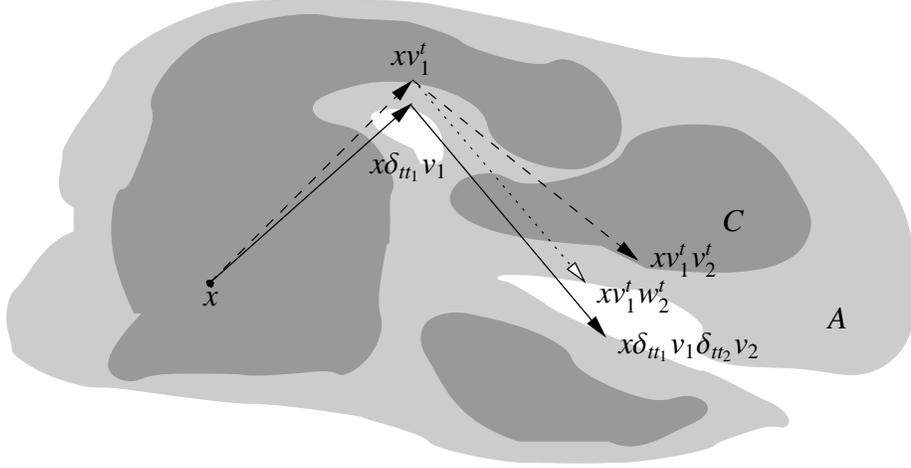}
   \caption{The choice of elements $v_i^t$ and $w_i^t$ in the proof of Theorem \ref{thm:Pansudiff}.}
   \label{fig:Pansu}
\end{figure}
For all $t\in(-1,1)$, we have 
\[
  \delta_{\frac{1}{t}}(f(x)^{-1}f(xv_1^t\cdots v_N^t)) = \prod_{i=1}^N\delta_{\frac{1}{t}}(f(xv_1^t\cdots v_{i-1}^t)^{-1}f(xv_1^t\cdots v_i^t)).
 \]
We now observe that for any $a>0$ at all points $z\in \tilde A$ where $\der_{v_i}f(z)$ exists, we also have 
the existence of $\der_{av_i}f(z)$ and there holds the homogeneity 
\begin{equation}\label{eq:hom}
\delta_a\der_{v_i}f(z)=\der_{av_i}f(z)\,.
\end{equation}
This allows us to consider all possible rescaled partial derivatives. Thus,
defining $\eta_{i}^t = xv_1^t\cdots v_{i-1}^t$ for any $i = 1, \dots, N$, we can consider
the inequality
 \begin{align*}
  \rho(\delta_{\frac{1}{t}}&(f(\eta_{i}^t)^{-1}f(\eta_{i}^tw_i^t)), \partial_{\delta_{t_i}v_i}f(x))
   \le \rho(\delta_{\frac{1}{t}}(f(\eta_{i}^t)^{-1}f(\eta_{i}^tw_i^t)), \partial_{\delta_{\frac1t}w_i^t}f(\eta_{i}^t)) \\
       & + \rho(\partial_{\delta_{\frac1t}w_i^t}f(\eta_{i}^t), \partial_{\delta_{t_i}v_i}f(\eta_{i}^t))
        + \rho(\partial_{\delta_{t_i}v_i}f(\eta_{i}^t), \partial_{\delta_{t_i}v_i}f(x))\,.
\end{align*}
The first addend on the right hand side uniformly converges to $0$ when $x$ varies in $C$,
as $t\to0$. In fact, $\eta_{i}^t \in C$ and the uniform convergence of \eqref{eq:uniform} is 
preserved under rescaling, where $\delta_{\frac1t}w_i^t = \delta_{h_tt_i}v_i$ and
$h_t$ uniformly converges to one on $C$ as $t\to0$, due to \eqref{eq:huniform}.
  
Since we have the rescaling of partial derivatives, we get
\[
\rho(\partial_{\delta_{\frac1t}w_i^t}f(\eta_{i}^t), \partial_{\delta_{t_i}v_i}f(\eta_{i}^t))=
\rho(\delta_{h_tt_i}\partial_{v_i}f(\eta_{i}^t), \delta_{t_i}\partial_{v_i}f(\eta_{i}^t))
\]
that uniformly goes to zero as $x\in C$ and $t\to0$, again due to \eqref{eq:huniform}.
This implies the uniform convergence of the second addend to zero.
Finally, the third term converges uniformly to $0$ by the continuity of $\partial_{v_i}f$ on $C$.
Joining \eqref{eq:Cdensity} and \eqref{eq:dens}, we have that
\begin{equation}\label{eq:uniformlyclose}
\delta_{1/t}(v_1^t\cdots v_i^t)\to\prod_{l=1}^i \delta_{t_l}v_l
\end{equation} 
uniformly with respect to all $i=1,\ldots,N$ and $|t_i|<T$ as $t \to 0$. It follows that 
\[
t^{-1}d(v_i^t,\delta_{tt_i}v_i)\to 0\quad\mbox{ as }\quad t\to0
\]
for all $i=1,\ldots,N$ and $|t_i|<T$.
This convergence is not uniform with respect to $x$, although we could make it
even uniform with respect to $x$ by choosing this element in a ``slightly smaller subset of $C$''. 
We have 
$t^{-1}d(w^t_i,v_i^t)\leq t^{-1}d(w_i^t,\delta_{tt_i}v_i)+t^{-1}d(\delta_{tt_i}v_i,v_i^t)\to0$
as $t\to0$. This gives 
 \[
  \delta_{\frac{1}{t}}(f(\eta_{i}^tw_i^t)^{-1}f(\eta_{i}^tv_i^t))  \to 0\quad\mbox{as}\quad
  t \to 0\,.
 \]
Combining the previous two limits we get that 
\[
  \delta_{\frac{1}{t}}(f(\eta_{i}^t)^{-1}f(\eta_{i}^tv_i^t)) 
   = \delta_{\frac{1}{t}}(f(\eta_{i}^t)^{-1}f(\eta_{i}^tw_i^t)) 
     \delta_{\frac{1}{t}}(f(\eta_{i}^tw_i^t)^{-1}f(\eta_{i}^tv_i^t))  \to \partial_{\delta_{t_i}v_i}f(x)
 \]
as $t \to 0$. This limit is uniform with respect to all $|t_i|<T$, but it may depend on $x$.
Therefore, we are lead to the existence of the following limit
 \begin{equation}\label{eq:diff}
  \lim_{t\to 0}\delta_{\frac{1}{t}}\big(f(x)^{-1} f(x v_1^t\cdots v_N^t)\big)
  = \prod_{i=1}^N \partial_{\delta_{t_i}v_i}f(x),
 \end{equation}
that is uniform with respect to $u \in V$.
This allows us to define $L_x(u) = \prod_{i=1}^N \partial_{\delta_{t_i}v_i}f(x)$ for all $u = \delta_{t_1}v_1 \cdots \delta_{t_N}v_N \in \G$.
Therefore the definition of $L_x(u)$ is independent of the choice of the representation of $u$. 
The choice of the ``nonlinear difference quotient'' 
\[
\delta_{\frac{1}{t}}\big(f(x)^{-1} f(x v_1^t\cdots v_N^t)\big)
\]
has been made in order to get the existence of the h-homomorphism $L_x$.
In fact, $L_x$ is an h-homomorphism since for all $u = \delta_{t_1}v_1 \cdots \delta_{t_N}v_N$ and
 $w = \delta_{\tau_1}v_1 \cdots \delta_{\tau_N}v_N$, we have
 \[
  L_x(uw) = \prod_{i=1}^N \partial_{\delta_{t_i}v_i}f(x) \prod_{i=1}^N \partial_{\delta_{\tau_i}v_i}f(x) = L_x(u)L_x(w).
 \]
The homogeneity of $L_x$ is obvious due to the homogeneity of partial derivatives \eqref{eq:hom}.
Let $(y_i)_{i=1}^\infty \subset A$ be any sequence so that $y_i \to x$. Define $\lambda_p = d(x,y_p)$ and let $z_p \in \G$ be such that $x^{-1}y_p = \delta_{\lambda_p}z_p$. It follows that
 \begin{align*}
  \lim_{p \to \infty} \frac{\rho(f(x)^{-1}f(y_p),L_x(x^{-1}y_p))}{d(x,y_p)} 
    & = \lim_{p \to \infty} \frac1{\lambda_p}\rho(f(x)^{-1}f(x\delta_{\lambda_p}z_p),L_x(\delta_{\lambda_p}z_p))\\
    & = \lim_{p \to \infty} \rho(\delta_{\frac1{\lambda_p}}(f(x)^{-1}f(x\delta_{\lambda_p}z_p)),L_x(z_p)) = 0
 \end{align*}
since both the limits \eqref{eq:diff} and \eqref{eq:uniformlyclose} for $i=N$ are uniform. 
Thus, $L_x$ is the h-differential of $f$ at $x$. The arbitrary choice of $\ep>0$ concludes the proof. 
\end{proof}

%
%
%
%

\section{The metric area formula}\label{Sec:metarea}


Let us fix a metric space $Y$ equipped with a distance $\rho$ and let $\G$ denote a stratified 
group equipped with both a homogeneous distance $d$ and the Hausdorff measure $\cH_d^Q$ constructed with respect to $d$.
The integer $Q$ is the homogeneous dimension of $\G$. Since $\G$ is a locally compact real Lie group, the measure $\cH_d^Q$ is 
the Haar measure of $\G$.
For all homogeneous distances $\sigma$ on $\G$, we set $\cH_\sigma^Q=\beta_Q\,h_\sigma^Q$ and 
\[
h_\sigma^Q(A)=\sup_{\ep>0}\;\inf\left\{\sum_{j=0}^\infty\frac{\diam_\sigma(E_j)^Q}{2^Q}: A\subset\bigcup_{j=0}^\infty E_j,\;
\diam_\sigma(E_j)\leq\ep\right\}
\]
where the constant $\beta_Q>0$ is fixed.
%
%
%
%
%
%
%
%
\begin{Def}[Metric differentiability]\label{def:metdif}\rm
Let $A\subset\G$ and let $f:A\lra Y$. We fix a {\em density point} $x\in A$.
Then we say that $f$ is {\em metrically differentiable} at $x$ if there exists
a homogeneous seminorm $s$ such that $\rho(f(x),f(xz))-s(z)=o(\|z\|)$ as 
$z\in x^{-1}A$ and $\|z\|\to0^+$.
The homogeneous seminorm $s$ is unique and it is denoted by $mdf(x)$,
which we call the {\em metric differential} of $f$ at $x$.
\end{Def}
Notice that in the case $\G$ is a Euclidean space, formula \eqref{gareametric} yields the 
Kirchheim's area formula established in \cite{Kir94}, where the following notion of Jacobian is used
\begin{equation}\label{KirJ}
 \cJ(s)=\frac{n\,\omega_n}{\int_{\S^{n-1}}s(x)^{-n}\,d\cH^{n-1}_{|\cdot|}(x)}\,.
\end{equation}
Here $s$ denotes a seminorm on $\R^n$ and $|\cdot|$ is the standard Euclidean norm.
In fact, the metric differential in this case is precisely a seminorm.
By the special geometric properties of normed spaces, Lemma~6 of \cite{Kir94} shows
that $\cJ(\|\cdot\|)=\cH^n_{\|\cdot\|}(A)/\cH^n_{|\cdot|}(A)$ for any $A\subset X$ of positive
measure, where $X$ is a finite dimensional Banach space with norm $\|\cdot\|$.
This immediately shows that formula \eqref{metJac} gives the Kirchheim's Jacobian when the Carnot group
is replaced by a Euclidean space.
The following remark shows that the metric Jacobian in stratified groups 
coincides with the sub-Riemannian Jacobian of \cite{Mag}.
\begin{Rem}\label{rem:BanHomJ}\rm
Let $L\colon \G\lra\vM$ be an injective h-homomorphism from a stratified group $\G$ to a Banach homogeneous 
group $\vM$. Let us define $s_L(x)=\rho(L(x),0)$ for all $x\in\G$ and notice that $s_L$ is a 
homogenous distance on $\G$, due to the injectivity of $L$. One can easily check that 
$\cH_{s_L}^Q(B_1)=\cH^Q_\rho(L(B_1))$. It follows that 
\begin{equation}\label{Js}
 J(s_L)=\frac{\cH^Q_{\rho}\big(L(B_1)\big)}{\cH_d^Q(B_1)}\,.
\end{equation}
When $\vM$ is in particular a Carnot group, then \eqref{Js} shows that
the metric Jacobian \eqref{metJac} coincides with the ``sub-Riemannian Jacobian'' 
introduced in Definition~10 of \cite{Mag}. 
One can relate these Jacobians with the classical ones computed by matrices. 
In fact, as proved in Proposition~3.18 of \cite{Mag}, there is a geometric constant $C$, 
a priori also depending on the subspace
$L(\G)$, such that 
\begin{equation}\label{Jacexpl}
J(s_L)=C\,\sqrt{\det(L_0^TL_0)}\,,
\end{equation}
where $L_0$ is the matrix representing $L$ as a linear mapping from $\G$ to $L(\G)$
with respect to a fixed scalar product. Of course, when one chooses distances with
many symmetries the constant $C$ will not depend on the subspace $L(\G)$.
Formula \eqref{Jacexpl} extends to our framework, since in the general case
where the target is a Banach homogeneous group $\vM$, we have that $L(\G)$ is still 
a finite dimensional linear subspace of $\vM$.
\end{Rem}
The key ingredient for the area formula is the linearization procedure associated to the
a.e. differentiability, see Lemma~3.2.2 of \cite{Fed}. For metric valued Lipschitz mappings 
with Euclidean source space, Kirchheim uses the separability of all compact convex sets,
according to 2.10.21 of \cite{Fed}, in order to get the following separability of norms:
there exists a countable family of norms $\{\|\cdot\|_i\}$ such that for every $0<\ep<1$
and every norm $\|\cdot\|$ we have some $\|\cdot\|_{i_0}$ such that 
$(1-\ep) \|\cdot\|_{i_0}\leq\|\cdot\|\leq (1+\ep)\|\cdot\|_{i_0}$.

Let us point out that a metric ball with respect to a homogenous distance need not be
a convex set. However, the previous separability of norms still holds for homogeneous norms,
since the point is that a Carnot group $\G$ is a boundedly compact metric space
and the class of nonempty compact sets in $\G$ is separable with respect to the 
Hausdorff distance between compact sets.
\begin{Lem}\label{lma:sepa}
There exists a countable family $\cF$ of homogeneous norms such that 
for every $\ep\in(0,1)$ and every homogeneous norm $\nu$, there exists
$s\in\cF$ such that 
\begin{equation}
(1-\ep)\,s\leq \nu\leq (1+\ep)\,s .
\end{equation}
\end{Lem}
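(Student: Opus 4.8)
The plan is to deduce this from two facts: (i) the space of nonempty compact subsets of $\G$, equipped with the Hausdorff distance, is separable because $\G$ is a boundedly compact (hence separable) metric space; and (ii) a homogeneous norm is determined by its closed unit ball, which is a compact set, and two homogeneous norms are close (in the multiplicative sense above) precisely when their unit balls are close in a suitable Hausdorff-type sense. So first I would fix the closed unit ball $D_1^d$ of the ambient homogeneous distance $d$, and for each homogeneous norm $\nu$ consider its closed unit ball $B^\nu=\{x\in\G:\nu(x)\leq 1\}$; by continuity and homogeneity of $\nu$ together with $\nu^{-1}(0)=\{0\}$ (norm, not just seminorm), $B^\nu$ is a compact neighbourhood of the origin, star-shaped with respect to dilations, and $\nu$ is recovered via $\nu(x)=\inf\{r>0:\delta_{1/r}x\in B^\nu\}$.

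Next I would use the homogeneity to convert the multiplicative sandwich $(1-\ep)s\leq\nu\leq(1+\ep)s$ into a containment of dilated unit balls: $\nu\leq(1+\ep)s$ is equivalent to $B^s\subset (1+\ep)$-dilate of $B^\nu$, i.e. $\delta_{1/(1+\ep)}B^s\subset B^\nu$, and similarly $(1-\ep)s\leq\nu$ is equivalent to $B^\nu\subset\delta_{1/(1-\ep)}B^s$. Hence it suffices to produce a countable family $\cF$ of homogeneous norms whose unit balls approximate, in this two-sided dilation-enclosure sense, the unit ball of an arbitrary homogeneous norm. To build $\cF$: take a countable dense family $\{K_i\}$ (w.r.t. Hausdorff distance) among those compact sets $K$ satisfying, say, $D_r\subset K\subset D_R$ for fixed $0<r<R$ and $\delta_\lambda K\subseteq K$ for all $\lambda\in(0,1)$ (dilation star-shapedness); one checks such a family is still separable since it is a closed subset of the separable hyperspace of compact sets. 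Each such $K_i$ defines a homogeneous norm $s_i$ by the Minkowski-type gauge above — I would verify $s_i(\delta_\lambda x)=\lambda s_i(x)$, $s_i(x)=s_i(x^{-1})$ (arrange this by symmetrizing $K_i$, i.e. intersecting with its inverse image under $x\mapsto x^{-1}$, which preserves the other properties), and the triangle inequality $s_i(xy)\leq s_i(x)+s_i(y)$, which is the one place a genuine argument is needed and which follows from a standard gauge computation once $K_i$ has the right convexity-substitute property. Here one must be careful: homogeneous balls are generally not convex, so the triangle inequality for the gauge is not automatic; the clean fix is to restrict $\cF$ to norms of the form $x\mapsto\max\{\sigma_1 s'(x),\ldots\}$ built from the given norm $\|\cdot\|$ as in \eqref{Banhom}, or more simply to note that any homogeneous norm $\nu$ satisfies $c_1\|x\|\leq\nu(x)\leq c_2\|x\|$, so one only needs to approximate the ``shape'' $B^\nu$ among compact sets squeezed between two fixed $\|\cdot\|$-balls, and the triangle inequality is inherited because being a homogeneous norm is equivalent to the unit ball $B$ satisfying $B\cdot B\subseteq\delta_2 B$ — a closed condition under Hausdorff convergence.

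Given the dense family $\{s_i\}$, the approximation step is then routine: given $\nu$ and $\ep\in(0,1)$, the unit ball $B^\nu$ lies in the separable hyperspace, so pick $s=s_i$ with $d_H(B^\nu,B^{s})$ small; a small Hausdorff distance between two such dilation-star-shaped compact sets squeezed between $\|\cdot\|$-balls translates, by an elementary estimate using the lower bound $\nu\geq c_1\|\cdot\|$, into the enclosures $\delta_{1/(1+\ep)}B^s\subset B^\nu\subset\delta_{1/(1-\ep)}B^s$, which is exactly the desired sandwich. The main obstacle, and the only genuinely non-cosmetic point, is ensuring that the gauges $s_i$ of the approximating compact sets are themselves honest homogeneous norms (in particular satisfy the triangle inequality) and that this class of ``admissible'' unit balls is closed under Hausdorff convergence so that density passes to it; I would handle this by characterizing admissibility as the conjunction of the closed conditions $D_r\subseteq B$, $B\subseteq D_R$, $\delta_\lambda B\subseteq B$ for $\lambda\in(0,1]$, and $B\cdot B\subseteq\delta_2 B$, each of which is preserved under Hausdorff limits, so that the admissible balls form a closed — hence separable — subspace of the hyperspace of compact subsets of $\G$.
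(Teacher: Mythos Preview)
Your approach is exactly the one the paper indicates (the paper gives no detailed proof, only the sentence preceding the lemma: $\G$ is boundedly compact, so the hyperspace of nonempty compact subsets with the Hausdorff distance is separable). However, you have taken an unnecessary detour that also contains an error. You do not need the set of ``admissible'' unit balls to be \emph{closed} in the hyperspace: in a metric space every subset of a separable space is separable. So simply take a countable Hausdorff-dense subset of the set $\{B^\nu:\nu\text{ a homogeneous norm}\}$ itself; the corresponding gauges are then genuine homogeneous norms by construction, and the triangle-inequality worry evaporates. Your final step --- converting small Hausdorff distance between $B^\nu$ and $B^{s}$ into the dilation-enclosures $\delta_{1/(1+\ep)}B^{s}\subset B^\nu\subset\delta_{1/(1-\ep)}B^{s}$ via the lower bound $\nu\geq c\,d(\cdot,0)$ on a compact annulus --- is correct and finishes the proof.

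The specific claim that ``being a homogeneous norm is equivalent to the unit ball $B$ satisfying $B\cdot B\subseteq\delta_2 B$'' is wrong in the noncommutative case. That condition only yields, for $x,y\in B$, $\delta_{1/2}x\cdot\delta_{1/2}y\in B$; in an abelian group this midpoint property plus closedness forces convexity, hence the triangle inequality for the gauge, but the inductive step fails in a Carnot group because $\delta_a y\cdot\delta_b y\neq\delta_{a+b}y$ in general (already in the Heisenberg group, $\delta_{1/4}y\cdot\delta_{1/2}y$ and $\delta_{3/4}y$ differ in the second layer). The correct closed condition is the full family $\delta_a B\cdot\delta_{1-a}B\subseteq B$ for all $a\in[0,1]$, which does characterize unit balls of homogeneous norms and is preserved under Hausdorff limits --- but, as noted, you do not need any such characterization.
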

Next, we recall the measure theoretic notion of Jacobian, see \cite{Mag22}.
\begin{Def}[Measure theoretic Jacobian]
Let $E\subset\G$ be a closed set, $f\colon E\lra Y$ Lipschitz and $x\in E$.
Then we define 
\[
 J_f(x)=\limsup_{r\to0^+}\frac{\cH^Q_\rho\big(f(E\cap D_{x,r})\big)}{\cH^Q_d(D_{x,r})}.
\]
\end{Def}
\begin{Lem}\label{lem:decJ}
Let $E\subset\G$ be closed and let $f\colon E\lra Y$ be Lipschitz.
Denote by $\cD\subset E$ the subset of points where $f$ is metrically differentiable and
the metric differential is a homogeneous norm. Then the following statements hold.
\begin{enumerate}
\item 
There exists a family of Borel sets $\{E_i\}_{i\in\N}$ such that
$\cD=\bigcup_{i=0}^\infty E_i$ and $f_{|E_i}$ is bi-Lipschitz onto its image.
\item
For $\cH^Q$-a.e. $x\in\cD$ we have $J\big(mdf(x)\big)=J_f(x)$.
\end{enumerate}
\end{Lem}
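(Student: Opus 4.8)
The plan is to treat the two statements in sequence, deriving the second from the linearization achieved in the first together with the separability of homogeneous norms from Lemma~\ref{lma:sepa}. For part (1), I would fix a countable family $\cF = \{s_k\}$ of homogeneous norms as in Lemma~\ref{lma:sepa}, and for each $k$ and each pair of rationals $0 < a < b$ introduce the set $E_{k,a,b}$ of points $x \in \cD$ such that $mdf(x)$ is $(1+\ep_k)$-close to $s_k$ in the sense of Lemma~\ref{lma:sepa} \emph{and} such that the error term in metric differentiability is controlled on the ball of radius $1/m$ for some $m$ chosen in the rational parameter $b$, while $a$ records a lower density-type bound. More precisely: since $f$ is metrically differentiable at $x$ with differential the homogeneous norm $\nu_x = mdf(x)$, for every $\eta > 0$ there is $r_x > 0$ with $|\rho(f(x),f(xz)) - \nu_x(z)| \le \eta\|z\|$ whenever $\|z\| \le r_x$; combining with the two-sided comparison $(1-\ep)s_k \le \nu_x \le (1+\ep)s_k$ and recalling that a homogeneous norm satisfies $c^{-1}\|z\| \le s_k(z) \le c\|z\|$ for a uniform constant, one gets that on each piece $f$ is bi-Lipschitz. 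Writing each such piece as a countable union of Borel sets on which $r_x$ is bounded below and the comparison is uniform, and then passing to a countable refinement over all $(k,a,b)$, yields the desired countable Borel decomposition $\cD = \bigcup_i E_i$ with $f_{|E_i}$ bi-Lipschitz onto its image. A point to be careful about here is that $\cD$ itself is Borel (or at least $\cH^Q$-measurable): the set of metric differentiability points and the map $x \mapsto mdf(x)$ are Borel by the usual argument using a countable dense set of $z$'s and the countable family $\cF$, so this does not cause trouble.

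For part (2), I would argue at a point $x \in \cD$ that is simultaneously a density point of some $E_i$ on which $f$ is bi-Lipschitz and a point where the metric differentiability estimate holds. The inequality $J\big(mdf(x)\big) \le J_f(x)$ should come from below: using that $\rho(f(x),f(xz)) \ge (1-\eta\|z\|/\ |z|\text{-type})\nu_x(z)$ near $x$ and that $x$ is a density point of $E_i$, the image $f(E \cap D_{x,r})$ contains (up to a set of small relative measure) a nearly-$\delta_r$-scaled copy of the $\nu_x$-ball, so that $\cH^Q_\rho(f(E\cap D_{x,r})) \gtrsim (1-o(1))\, \cH^Q_{\nu_x}(D_{x,r}^{d}) = (1-o(1))\, J(\nu_x)\, \cH^Q_d(D_{x,r})$, where the equality uses precisely the definition \eqref{metJac} of the metric Jacobian, namely $J(\nu_x) = \cH^Q_{\nu_x}(B_1)/\cH^Q_d(B_1)$ together with scaling by $\delta_r$. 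The reverse inequality $J_f(x) \le J(mdf(x))$ is the harder half: here I would cover $E \cap D_{x,r}$ efficiently and use the upper metric differentiability bound $\rho(f(x'),f(x'')) \le \nu_x(x'^{-1}x'') + o(r)$ for $x', x'' \in E$ close to $x$ to transport an efficient $d$-cover of $E \cap D_{x,r}$ to a $\rho$-cover of $f(E \cap D_{x,r})$ whose diameters are controlled by $\nu_x$; passing to the Carathéodory construction and letting the gauge go to zero gives $\cH^Q_\rho(f(E\cap D_{x,r})) \le (1+o(1)) \cH^Q_{\nu_x}(E \cap D_{x,r}) \le (1+o(1))\,J(\nu_x)\,\cH^Q_d(D_{x,r})$.

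The main obstacle, as usual in area-formula arguments, is controlling the lower bound (and hence the two-sided equality) uniformly as $r \to 0$ at a \emph{density} point of $E_i$ rather than at a point where $E_i$ is locally all of $D_{x,r}$: one must show that the missing portion $D_{x,r} \setminus E$, which is relatively negligible in the source, does not allow $f(E \cap D_{x,r})$ to be much smaller than $f(D_{x,r})$ would be. This is handled by the bi-Lipschitz property on $E_i$: since $f_{|E_i}$ distorts measures by a bounded factor, a relatively small exceptional set in the source maps to a relatively small set in the target, so the density point condition is preserved across $f$ up to the bi-Lipschitz constant, and taking $i$ with $J$-value pinched close to $J(mdf(x))$ via Lemma~\ref{lma:sepa} closes the gap. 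The negligibility of the complement $\cD^c \cap \{x : mdf(x) \text{ is not a norm}\}$ is not needed here — that is dealt with separately in Lemma~\ref{LipNegl} — so in this lemma I may work entirely within $\cD$.
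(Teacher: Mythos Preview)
Your approach coincides with the paper's for part~(1): fix the countable family $\cF=\{s_i\}$ from Lemma~\ref{lma:sepa}, stratify $\cD$ by the index $i$ of the approximating norm and by a scale $n$ below which the metric-differentiability error is at most $\ep\, s_i$, and then localize to balls of radius comparable to $e^{-n}$ to obtain the two-sided estimate $(1-\ep)s_i(x^{-1}y)\le\rho(f(x),f(y))\le(1+\ep)s_i(x^{-1}y)$ on each piece.

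For part~(2) your strategy is also the paper's, but your description of the two inequalities is where the imprecision lies. The sentence ``the image $f(E\cap D_{x,r})$ contains \ldots\ a nearly-$\delta_r$-scaled copy of the $\nu_x$-ball'' has no content when $Y$ is an abstract metric space: there are no dilations in $Y$ and no $\nu_x$-balls there to speak of. Similarly, the two-point estimate $\rho(f(x'),f(x''))\le \nu_x(x'^{-1}x'')+o(r)$ does \emph{not} follow from metric differentiability at the single point $x$; it holds only for $x',x''$ in the piece $\cD_{i,n}$, where one may take $x'$ as base point. The paper avoids both issues by never leaving the level of Hausdorff measures: since $f$ restricted to the piece is bi-Lipschitz from the $s_i$-metric to $\rho$ with constants $(1\pm\ep)$, one has directly $(1-\ep)^Q\cH^Q_{s_i}(A)\le\cH^Q_\rho(f(A))\le(1+\ep)^Q\cH^Q_{s_i}(A)$ for every subset $A$ of the piece. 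At a density point $z$ of $\cD_{i,n}$ one then splits $D_{z,r}\cap E$ into $D_{z,r}\cap\cD_{i,n}$ and the remainder, the latter handled by the global Lipschitz constant and vanishing in relative measure, to get
\[
(1-\ep)^Q\,\frac{\cH^Q_{s_i}(D_1)}{\cH^Q_d(D_1)}\ \le\ J_f(z)\ \le\ (1+\ep)^Q\,\frac{\cH^Q_{s_i}(D_1)}{\cH^Q_d(D_1)}\,,
\]
with no covering-transport step. Letting $\ep\to 0$ along a sequence and using $(1-\ep)s_i\le mdf(z)\le(1+\ep)s_i$ gives $J_f(z)=J(mdf(z))$. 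Your final paragraph shows you see this mechanism; just replace the ball-image language with the direct $\cH^Q_{s_i}$-to-$\cH^Q_\rho$ comparison.
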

\begin{proof}
Let $\cF=\{s_i:i\in\N\}$ be as in Lemma~\ref{lma:sepa}.
Fix an arbitrary $\ep\in(0,1)$ and define for all $i,n\in\N$ the set
\[
 \cD_{i,n}=\{x\in\cD~:~ |\rho\big(f(xu),f(x)\big)-\,s_i(u)|\leq\ep s_i(u)
\text{ for all } u\in x^{-1}E \text{ with } \|u\|<e^{-n}\}\,.
\]
Combining metric differentiability with Lemma~\ref{lma:sepa}
gives $\cD=\bigcup_{i,n\in\N}\cD_{i,n}$.
Since $\G$ is separable, we can cover $\cD_{i,n}$ with a countable family of sets
$D_{z_l,e^{-n}/4}\cap\cD_{i,n}$ with $l\in\N$.
Then for all $x,y\in D_{z_l,e^{-n}/4}\cap\cD_{i,n}$, we immediately get
\begin{equation}\label{blep}
(1-\ep)\,s_i(x^{-1}y)\leq \rho\big(f(x),f(y)\big)\leq(1+\ep)\,s_i(x^{-1}y).
\end{equation}
This concludes the proof of the first statement.
Now, we fix $z\in\cD_{i,n}\cap D(\cD_{i,n})$. Then for all $r\in \left]0,e^{-n}/2\right[$ 
and $x,y\in D_{z,r}\cap \cD_{i,n}$, the inequalities \eqref{blep} again hold.
As a consequence, setting $L=\Lip(f)$, we get
\[
 \frac{\cH^Q_\rho\big(f(D_{z,r}\cap E)\big)}{\cH_d^Q(D_{z,r})}\leq
(1+\ep)^Q\frac{\cH^Q_{s_i}(D_{z,r})}{\cH_d^Q(D_{z,r})}+L^Q\frac{\cH_d^Q(D_{z,r}\sm\cD_{i,n})}{\cH_d^Q(D_{z,r})}
\]
and
\[
 \frac{\cH^Q_\rho\big(f(D_{z,r}\cap E)\big)}{\cH_d^Q(D_{z,r})}\geq
\frac{\cH^Q_\rho\big(f(D_{z,r}\cap\cD_{i,n})\big)}{\cH_d^Q(D_{z,r})}\geq
(1-\ep)^Q\frac{\cH^Q_{s_i}(D_{z,r}\cap\cD_{i,n})}{\cH_d^Q(D_{z,r})}.
\]
Therefore we have 
\[
(1-\ep)^Q\,\frac{\cH^Q_{s_i}(D_1)}{\cH_d^Q(D_1)}\leq\limsup_{r\to0^+} \frac{\cH^Q_\rho\big(f(D_{z,r}\cap E)\big)}{\cH_d^Q(D_{z,r})}\leq
(1+\ep)^Q\,\frac{\cH^Q_{s_i}(D_1)}{\cH^Q_d(D_1)}\,,
\]
for an arbitrary $\ep>0$. This leads us to the conclusion.
\end{proof}
\begin{Lem}\label{LipNegl}
Let $f:\G\lra Y$ be a Lipschitz mapping.
Let $E_0$ be the set of points $x\in\G$ for which there exists $v_x\in\G$
with $\|v_x\|=1$ such that for all $\ep>0$ we have $0<r_\ep<\ep$ 
such that 
\begin{equation}\label{mdsmall}
\rho\big(f(x\delta_rv_x),f(x)\big)\leq\ep\,r_\ep
\end{equation}
for all $0<r\leq r_\ep$. It follows that $\cH_\rho^Q\big(f(E_0)\big)=0$.
\end{Lem}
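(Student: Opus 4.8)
The plan is a tube/covering argument: at a point $x\in E_0$ the mapping $f$ is, by \eqref{mdsmall}, almost constant along the curve $\gamma_x(t)=x\delta_tv_x$, so a thin tube around $\gamma_x$ is sent into a small $\rho$‑ball, while the homogeneous structure of $\G$ forces this tube to have comparatively large $\cH^Q_d$‑measure; hence the ratio $\cH^Q_\rho(\text{image})/\cH^Q_d(\text{tube})$ can be made as small as we wish, and an efficient covering of $E_0$ by such tubes gives $\cH^Q_\rho(f(E_0))=0$. Since all homogeneous distances on $\G$ are bi‑Lipschitz equivalent, I freely use the gradation $\G=S_1\oplus\cdots\oplus S_\upsilon$, the projections $\pi_i$, and write $\Lip(f)$ for the Lipschitz constant and $\|z\|=d(0,z)$.

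\emph{Reduction.} For $x\in E_0$ let $j(x)$ be the least index with $\pi_{j(x)}(v_x)\ne0$, and for $j\in\{1,\dots,\upsilon\}$ and $m,k\in\N\setminus\{0\}$ put $E_0^{j,m}=\{x\in E_0:j(x)=j,\ |\pi_j(v_x)|\ge 1/m\}$. As $v_x\ne0$, the sets $E_0^{j,m}\cap B_k$ cover $E_0$ along a countable index set, so by countable subadditivity of $\cH^Q_\rho$ it suffices to show $\cH^Q_\rho\bigl(f(E_0^{j,m}\cap B_k)\bigr)=0$ for each triple $(j,m,k)$. The point of this splitting is a spreading estimate that is uniform on $E_0^{j,m}$: since $\pi_i(v_x)=0$ for $i<j$ and iterated brackets raise the degree, the element $(\delta_tv_x)^{-1}(\delta_{t'}v_x)$ has vanishing components in layers $1,\dots,j-1$ and its $S_j$‑component equals $(t'^{\,j}-t^{\,j})\pi_j(v_x)$; since a homogeneous norm of such an element dominates a fixed multiple of $|\pi_j(\cdot)|^{1/j}$ (by a compactness argument on the unit sphere), there is $\theta=\theta(j,m)>0$ with $d\bigl(x\delta_tv_x,x\delta_{t'}v_x\bigr)\ge\theta\,|t'^{\,j}-t^{\,j}|^{1/j}$ for all $x\in E_0^{j,m}$ and $t,t'\ge0$.

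\emph{Tubes.} Fix $(j,m,k)$ and a small $\lambda>0$, to be sent to $0$. For $x\in E_0^{j,m}\cap B_k$, \eqref{mdsmall} with $\ep=\lambda$ yields $r_x\in(0,\lambda)$ with $\rho\bigl(f(x\delta_rv_x),f(x)\bigr)\le\lambda r_x$ for $0<r\le r_x$; set $\tau_x=\lambda r_x$ and let $C_x$ be the closed $\tau_x$‑neighbourhood of $\gamma_x([0,r_x])$ in $(\G,d)$. For $\lambda$ small three facts hold. (i) From $\|\delta_tv_x\|=t\le r_x$ one gets $C_x\subseteq D_{x,2\lambda}\subseteq B_{k+2}$, and combining \eqref{mdsmall} with the Lipschitz bound shows that $f(C_x)$ lies in the closed $\rho$‑ball of radius $(1+\Lip(f))\lambda r_x$ about $f(x)$. (ii) By the spreading estimate, the points $x\delta_{t_\ell}v_x$ with $t_\ell=\ell^{1/j}\tau_x/\theta$, for $\ell=0,1,\dots,L$ with $L=\lfloor(\theta/\lambda)^j\rfloor$, lie on $\gamma_x$ and are pairwise $\tau_x$‑separated, so the balls $D_{x\delta_{t_\ell}v_x,\tau_x/2}\subseteq C_x$ are disjoint; left invariance and $Q$‑homogeneity of $\cH^Q_d$ give $\cH^Q_d(C_x)\ge(L+1)\,2^{-Q}\cH^Q_d(D_1)\,\tau_x^Q\ge c_1(j,m)\,\lambda^{Q-j}r_x^Q$ with $c_1(j,m)>0$. (iii) $D_{x,\tau_x}\subseteq C_x$, so $\cH^Q_d(C_x)>0$.

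\emph{Covering and conclusion.} The $C_x$, $x\in E_0^{j,m}\cap B_k$, lie in the bounded set $B_{k+2}$ and have positive measure; by the greedy selection underlying the $5r$‑covering lemma (carried out over the dyadic size‑classes of $r_x$) there is a countable pairwise disjoint subfamily $\{C_{x_i}\}_i$ such that every $C_y$ meets some $C_{x_i}$ with $r_{x_i}\ge\tfrac12 r_y$. For $y\in E_0^{j,m}\cap B_k$, choosing such an $i$ and a point $z\in C_y\cap C_{x_i}$ and applying (i) twice gives $\rho\bigl(f(y),f(x_i)\bigr)\le 3(1+\Lip(f))\lambda r_{x_i}$, hence $f(E_0^{j,m}\cap B_k)\subseteq\bigcup_i D_\rho\bigl(f(x_i),3(1+\Lip(f))\lambda r_{x_i}\bigr)$, a cover by $\rho$‑balls of diameter $\le6(1+\Lip(f))\lambda^2$. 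Therefore
\[
\cH^Q_\rho\bigl(f(E_0^{j,m}\cap B_k)\bigr)\le\beta_Q\bigl(3(1+\Lip(f))\bigr)^Q\lambda^Q\sum_i r_{x_i}^Q\le\frac{\beta_Q\bigl(3(1+\Lip(f))\bigr)^Q}{c_1(j,m)}\,\cH^Q_d(B_{k+2})\,\lambda^{\,j},
\]
where the second inequality uses $r_{x_i}^Q\le\cH^Q_d(C_{x_i})/(c_1(j,m)\lambda^{Q-j})$ and the disjointness of the $C_{x_i}$. Letting $\lambda\to0^+$ (and noting that a suitably small $\lambda$ makes the above cover admissible for any prescribed gauge) and recalling $j\ge1$, we obtain $\cH^Q_\rho\bigl(f(E_0^{j,m}\cap B_k)\bigr)=0$; summing over $(j,m,k)$ finishes the proof. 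The main difficulty is precisely the lower bound in (ii): one needs a spreading rate for $t\mapsto x\delta_tv_x$ that is uniform along a countable decomposition of $E_0$, which is what the splitting by the lowest layer $j$ of $v_x$ and by $m$ provides; once the scale $\lambda$ is fixed, the covering step is routine.
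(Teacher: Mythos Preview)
Your argument is correct, and it shares the same core idea as the paper's proof---a thin tube about $t\mapsto x\delta_tv_x$ maps into a small $\rho$--ball while carrying disproportionately large $\cH^Q_d$--mass---but the execution differs in two respects.  First, your layer--by--layer reduction and the spreading bound $d(x\delta_tv_x,x\delta_{t'}v_x)\ge\theta|t'^{\,j}-t^{\,j}|^{1/j}$ are unnecessary: since $\|v_x\|=1$, the reverse triangle inequality alone gives $d(\delta_tv_x,\delta_{t'}v_x)\ge\bigl|\|\delta_tv_x\|-\|\delta_{t'}v_x\|\bigr|=|t-t'|$ uniformly over $E_0$, so equally spaced parameters already yield $\gtrsim\lambda^{-1}$ disjoint balls of radius $\tau_x$ inside the tube, and the final bound is $O(\lambda)$ rather than $O(\lambda^{\,j})$; the splitting into $E_0^{j,m}$ can simply be dropped.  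Second, you finish with a hands--on Vitali selection among the tubes, whereas the paper packages the same tube estimate as a density statement: it shows that the pushforward $\mu_R=(f|_{B_R})_{\sharp}\cH^Q_d$ has infinite upper $Q$--density at every point of $f(E_0\cap B_R)$, and then invokes a standard differentiation theorem to conclude $\cH^Q_\rho(f(E_0\cap B_R))=0$.  Your covering route is self--contained and avoids quoting that lemma; the paper's route is shorter and makes the ``ratio blows up'' heuristic transparent.  Either way the substance is the same tube estimate, and your proof stands.
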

\begin{proof}
Let $L\geq1$ denote a Lipschitz constant of $f$, choose $x\in E_0$
and fix an arbitrary $R>0$. 
Take an arbitrary $\ep>0$ such that $4\ep<1$ and
$B_{x,\ep}\subset B_R$.
By definition of $E_0$, we have $v_x\in\S$ and $0<r_\ep<\ep$ such that
\eqref{mdsmall} holds. Defining 
\[
S_{v_x,\ep}=\bigcup_{0\leq r\leq r_\ep} B_{x\delta_rv_x,\frac{\ep r_\ep}{L}}\subset B_{x,\ep}\,,
\]
by triangle inequality, we get $f(S_{v_x,\ep})\subset B_\rho(f(x),2\ep r_\ep)$. 
Now, fix $N=[L/2\ep]$ and choose two distinct integers $i,j$ between $0$ and $N$.
Consider the elements 
\[
\xi\in B_{x\delta_{\frac{2\ep r_\ep i}{L}}v_x,\frac{\ep r_\ep}{L}}\quad\mbox{and}\quad
\eta\in B_{x\delta_{\frac{2\ep r_\ep j}{L}}v_x,\frac{\ep r_\ep}{L}},
\]
and notice that the previous balls are disjoint in view of the triangle inequality.
This leads us to $\cH_d^Q(S_{v_x,\ep})>C\,\ep^{Q-1}\,r_\ep^Q$.
As a result, the measure $\mu_R=(f_{|B_R})_\sharp\cH_\rho^Q$ on $Y$ satisfies
\[
\frac{\mu_R\Big(B_\rho(f(x),2\ep r_\ep)\Big)}{(2\ep r_\ep)^Q}\geq\frac{C}{2^Q\ep}\,.
\]
From the arbitrary choice of $\ep>0$, it follows that
\[
 \limsup_{t\to0^+}\frac{\mu_R\big(B_\rho(f(x),t)\big)}{t^{Q}}=+\infty
\]
for every $x\in E_0\cap B_R$, 
where $\mu_R$ is a finite Borel regular measure on $Y$.
Finally, standard differentiation theorems give $\cH_\rho^Q\big(f(E_0\cap B_R)\big)=0$
and the arbitrary choice of $R>0$ concludes the proof. 
\end{proof}
\begin{Cor}\label{cor:neg}
Let $E\subset\G$ be a closed set and let $f:E\lra Y$ be a Lipschitz mapping,
whose metric differential exists on a subset $E_0\subset E$ and at all point of
this set it is not a homogeneous norm. Then $\cH_\rho^Q\big(f(E_0)\big)=0$.
\end{Cor}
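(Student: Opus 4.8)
The plan is to reduce Corollary~\ref{cor:neg} to Lemma~\ref{LipNegl} by passing to a Lipschitz extension of $f$ defined on all of $\G$. Since $\cH^Q_\rho$ depends only on the metric of $Y$, I would first embed $(Y,\rho)$ isometrically into the Banach space $\vX=\ell^\infty(Y)$ by the Kuratowski map $y\mapsto\rho(y,\cdot)-\rho(y_0,\cdot)$ with $y_0\in Y$ fixed, so that $\cH^Q_\rho$ on $Y$ coincides with the Hausdorff measure of $\vX$ restricted to its image. Extending each real coordinate of $f$ by the McShane formula then produces a Lipschitz map $\tilde f\colon\G\lra\vX$ with $\tilde f|_E=f$ and the same Lipschitz constant $L$. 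It will then be enough to show that $E_0$ is, up to an $\cH^Q_d$-negligible set, contained in the set associated with $\tilde f$ in Lemma~\ref{LipNegl}.

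Since $f$ maps $\cH^Q_d$-null sets to $\cH^Q_\rho$-null sets and $\cH^Q_d\big(E\sm D(E)\big)=0$ by the Lebesgue density theorem in the doubling group $\G$, I may restrict attention to points $x\in E_0\cap D(E)$. For such $x$ write $s_x=mdf(x)$; since $s_x$ is a continuous homogeneous seminorm that is not a homogeneous norm, there is $v_x$ with $\|v_x\|=1$ and $s_x(v_x)=0$. As $\G$ is finite dimensional, compactness of the $\|\cdot\|$-unit sphere yields a constant $C_x$ with $s_x\le C_x\,\|\cdot\|$, and combining this with $|s_x(z)-s_x(w)|\le s_x(z^{-1}w)$ shows that $s_x$ is $C_x$-Lipschitz for $d$.

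The core of the argument is that at any density point $x$ of $E$ one has $\tau_r:=d\big(E,x\delta_rv_x\big)=o(r)$ as $r\to0^+$. Indeed, if this failed there would be $c>0$ and $r_k\to0$ with $B_{x\delta_{r_k}v_x,cr_k}\cap E=\emptyset$; this ball is contained in $B_{x,(1+c)r_k}$, the two balls are disjoint, and $\cH^Q_d(B_{z,s})=s^Q\cH^Q_d(B_1)$, so the density of $E$ at $x$ along the radii $(1+c)r_k$ would be at most $1-\big(c/(1+c)\big)^Q<1$, contradicting $x\in D(E)$. Granting this, choose $q_r\in E$ with $d(q_r,x\delta_rv_x)=\tau_r$, set $z_r=x^{-1}q_r$, and observe that $xz_r\in E$, $\|z_r\|\le\|\delta_rv_x\|+\tau_r\le2r$, and $s_x(z_r)\le s_x(\delta_rv_x)+C_x\tau_r=C_x\tau_r$. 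The triangle inequality, the Lipschitz bound on $\tilde f$, and the metric differentiability of $f$ at $x$ then give
\begin{align*}
\rho\big(\tilde f(x\delta_rv_x),\tilde f(x)\big)
  &\le L\,\tau_r+\rho\big(f(xz_r),f(x)\big)\\
  &\le L\,\tau_r+s_x(z_r)+o(\|z_r\|)=o(r),
\end{align*}
so for every $\ep>0$ there is $r_\ep\in(0,\ep)$ with $\rho\big(\tilde f(x\delta_rv_x),\tilde f(x)\big)\le\ep r\le\ep r_\ep$ for all $0<r\le r_\ep$; this is exactly the condition defining the exceptional set of Lemma~\ref{LipNegl} for $\tilde f\colon\G\lra\vX$.

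Hence $E_0\cap D(E)$ is contained in that set, so Lemma~\ref{LipNegl} gives $\cH^Q_{\vX}\big(\tilde f(E_0\cap D(E))\big)=0$, and since $\tilde f$ extends $f$ and the embedding is isometric this means $\cH^Q_\rho\big(f(E_0\cap D(E))\big)=0$; combined with the negligible part this proves the corollary. I expect the main obstacle to be the density estimate $d(E,x\delta_rv_x)=o(r)$ and its interplay with the fact that metric differentiability controls $f$ only on $E$: one has to move from the point $x\delta_rv_x$, which need not lie in $E$, to a nearby point of $E$ and absorb the error through the Lipschitz continuity of $\tilde f$ and of $s_x$. The embedding, the McShane extension and the remaining density bookkeeping are routine.
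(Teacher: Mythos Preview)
Your proof is correct and follows the same route as the paper: isometric embedding into an $\ell^\infty$ space, coordinatewise McShane extension to all of $\G$, and reduction to Lemma~\ref{LipNegl}. The paper compresses your density/approximation step into the single sentence ``it is easy to check that $\tilde f$ is metrically differentiable at $x$ and $mdf(x)=md\tilde f(x)$'', whereas you verify directly (and only) the weaker condition \eqref{mdsmall} needed for Lemma~\ref{LipNegl}; your version is actually more transparent. Two cosmetic remarks: by Definition~\ref{def:metdif} metric differentiability is only defined at density points, so $E_0\subset D(E)$ automatically and the separate treatment of $E_0\setminus D(E)$ is unnecessary; and the phrase ``the two balls are disjoint'' in your density estimate is a slip (one ball is contained in the other---what matters is that the smaller ball is disjoint from $E$), though the ensuing measure bound is right.
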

\begin{proof}
The image $f(E)$ is separable in $Y$, so in particular it is a
separable metric space that can be isometrically embedded into $l^\infty$.
Hence we can assume that $Y=l^\infty$.
With this target, the componentwise extension of $f$ immediately yields a Lipschitz extension $\tilde f$ 
defined on all of $\G$, having the same Lipschitz constant. 
Take $x\in E_0\subset D(E)$ and $v_x\in\G$ such that $\|v_x\|=1$ and $mdf(x)(v_x)=0$.
It is easy to check that $\tilde f$ is metrically differentiable at $x$ and 
$mdf(x)=md\tilde f(x)$.
Therefore $\tilde f$ satisfies conditions of Lemma~\ref{LipNegl} and our claim holds.
\end{proof}
\begin{proof}[Proof of Theorem~\ref{gmetarea}]
Since $\cH^Q_d$ is Borel regular and $f$ is Lipschitz, it is not
restrictive to assume that $A$ is closed and that $f$ is everywhere 
metrically differentiable.
By definition of metric Jacobian, $J(mdf(z))=0$ whenever $z$ belongs to the subset $A_0$ of $A$ 
where the metric differential is not a homogeneous norm. Corollary~\ref{cor:neg} implies 
$\cH^Q_\rho(f(A_0))=0$, hence \eqref{gareametric} holds for the restriction $f_{|A_0}$.
By Lemma~\ref{lem:decJ}, we have $A\sm A_0=\bigcup_{j=0}^\infty E_j$ where $E_j$ are Borel
sets and $f_{|E_j}$ is injective and $J_f$ equals $J(mdf(\cdot))$ $\cH^Q$-almost everywhere, hence Theorem~2 of \cite{Mag22} establishes \eqref{gareametric} for $f_{|(A\sm A_0)}$ and concludes the proof.
\end{proof}

\end{document}